\documentclass[11pt]{article}
\usepackage[letterpaper,margin=1.00in]{geometry}
\usepackage{graphicx}
\usepackage{amsmath,amssymb}
\usepackage{enumitem}
\usepackage{framed}
\usepackage{float}

\usepackage{epstopdf}
\usepackage{todonotes}

\usepackage{tikz}
\usetikzlibrary{shapes,arrows}
\usepackage{todonotes}

\usepackage{pgfplots}
\usetikzlibrary{matrix}
\usepgfplotslibrary{groupplots}
\pgfplotsset{compat=newest}

\usepackage[titletoc]{appendix}

\usepackage[numbers]{natbib}
\usepackage{graphicx}
\usepackage{pdfsync}
\usepackage{color}
\usepackage{bbm}
\usepackage{mdwlist}
\usepackage{amsmath,amssymb}
\usepackage{enumitem} 
\usepackage{outline}
\usepackage{xspace}
\usepackage{floatflt}
\usepackage{rotating}

\usepackage{multirow}

\graphicspath{{./images/}}    


\usepackage{amsmath,amssymb,amsthm}		
\usepackage{xcolor}

\newcommand{\R}{\mathbb R}					
\newcommand{\st}{\mbox{s.t.}}				

\newtheorem{theorem}{Theorem}[section]
\newtheorem{definition}[theorem]{Definition}
\newtheorem{lemma}[theorem]{Lemma}
\newtheorem{assumption}[theorem]{Assumption}
\newtheorem{corollary}[theorem]{Corollary}
\newtheorem{remark}[theorem]{Remark}

\usepackage{algorithm}
\usepackage[noend]{algpseudocode}

\usepackage{graphicx}
\usepackage{epstopdf} 

\begin{document}

\title{Solving Quadratic Multi-Leader-Follower Games\\ by Smoothing the Follower's Best Response}
\author{Michael Herty, Sonja Steffensen, and Anna Th\"unen}
\date{\today}
\maketitle

\begin{abstract}

We derive Nash equilibria for a class of quadratic multi-leader-follower games using the nonsmooth best response function.
To overcome the challenge of nonsmoothness, we pursue a smoothing approach resulting in a reformulation as a smooth Nash equilibrium problem.
The existence and uniqueness of solutions are proven for all smoothing parameters.
Accumulation points of Nash equilibria exist for a decreasing sequence of these smoothing parameters and we show that these candidates fulfill the conditions of s-stationarity and are Nash equilibria to the multi-leader-follower game.
Finally, we propose an  update on the leader variables for efficient computation and numerically compare nonsmooth Newton and subgradient methods.
	
\medskip \noindent
{\bf Keywords: }Multi-Leader-Follower Games, Nash Equilibria,  Game Theory, Equilibrium Problems with Equilibrium Constraints

\medskip \noindent
{\bf AMS-MSC2010:}  91A06, 91A10, 90C33, 91A65, 49J52

\end{abstract}

\section{Introduction and Background }\label{S:intro}
The multi-leader-follower game (MLFG) is a particular class of problems in classical game theory \textcolor{black}{that represents a generalization} of the Stackelberg game which includes a single leader.
These models serve as an analytical tool for studying the  strategic behavior of noncooperative individuals.
In particular, the individuals (so-called players)  are divided into two groups, namely  leaders and  followers, according to their position in the game.
Mathematically, this yields a hierarchical Nash game, where further minimization problems appear in the participants' optimization problems as constraints.
An equilibrium is then given by a multistrategy vector of all players, where  no player has the incentive to change his chosen strategy unilaterally.

Most recently, such models have increasingly become the focus of interest among mathematicians as well as scientists in other fields such as operations research, robotics, and computer science \cite{Aussel16,Wang08,Koh10}.
However, compared to the knowledge of other classical game models, little is known   so far  concerning existence and uniqueness theory as well as suitable numerical solution methods.

\textcolor{black}{The structure of MLFGs is related to equilibrium problem with equilibrium constraints (EPEC) provided that the optimization problems of the followers can be replaced by the corresponding optimality conditions.}
The admissibility of this approach is discussed in \cite{Aussel2018}.
Recently,  the competition on the  electric power market is described by EPEC \cite{Allevi2018,Aussel17part1,Aussel17part2,henrion12,HR07}.
Here, the power generators and consumers are the leaders who bid their cost and utility functions.
The single follower plays the role of an independent system operator coordinating dispatch and minimizing  social costs subject to network constraints.

So far, there exist only a few recent theoretical results for general MLFGs or EPECs, analyzing the existence, the uniqueness and characterizations of equilibria: 
\textcolor{black}{
Early work by Sherali \cite{Sherali84} generalizes Stackelberg games in the setting of Cournot competition.
Existence theory of equilibria for identical leaders and sufficiency conditions for convex follower reaction functions is provided here.
Su \cite{Su2007} extended this work to a two-period forward market model and proposes an \textcolor{black}{SNCP} algorithm in \cite{SU2005}.
Later, further generalizations in the Cournot setting include the incorporation of stochasticity by DeMiguel and Xu  and the application to telecommunication industry \cite{DeMiguel2009}.}

\textcolor{black}{
In \cite{Kulkarni2014,Kulkarni2015}, Kulkarni and  Shanbhag introduce a more general setting for MLFGs and its reformulation with shared constraints.
The relation of Nash equilibria of the MLFG and weaker solution concepts of the reformulation are extensively discussed here, many results rely on potentiality of the players' objectives.
}

\textcolor{black}{Fukushima and Pang \cite{Pang2005} present a reformulation as generalized Nash equilibrium problem (GNEP).
Under suitable condtions, the equilibria of their GNEP reformulation can be characterized by a quasi-variational inequality.}
In another work, Hu and Ralph \cite{HR07} \textcolor{black}{apply a standard result }
for the existence of pure strategy Nash equilibria in a particular case of an MLFG related to an electricity market model.
Leyffer and Munson \cite{LeyfferMunson10} describe various reformulations in terms of  mathematical programs with equilibrium constraints (MPEC), nonlinear programs, and nonlinear complementarity problems.
\textcolor{black}{In a very recent paper \cite{Kim2019}, Kim and Ferris propose a reformulation of MLFG using extended mathematical programming which allows the usage of the complementarity problem solver PATH \cite{Dirkse1995,Ferris1999}.} 

More recent work has been done by Hu and Fukushima in \cite{HuFu2013}.
Therein, they discuss the existence of robust Nash equilibria of a class of quadratic MLFGs.
Further, they propose a uniqueness result for an MLFG with two leaders.

In this paper, we study a quadratic MLFG with  similarities to the model studied in \cite{HuFu2013} and generalize the follower's strategy set by allowing inequality constraints.
This modification translates  into equilibrium conditions or to a nonsmooth Nash game formulation.
The existence of Nash equilibria can be proven with suitable convexity assumptions.
Furthermore, we propose a smoothed Nash game formulation and prove uniqueness of Nash equilibria for an arbitrary number of players.
The main result of this article is that it can be shown that these smooth problems are indeed approximating Nash equilibria of the MLFG.
Besides the theoretical results, we propose an algorithm to numerically compute Nash equilibria.
Therein, we combine an update based on a Taylor expansion of the parameter-dependent solution and  the computation  of Nash equilibria of approximating problems.

%
%
%
 
This article is structured as follows.
In Section \ref{Sec:2}, we introduce the quadratic MLFG and develop an equivalent nonsmooth Nash equilibrium problem (NEP) for which we can prove existence of equilibria.
In Section \ref{Sec:3}, smoothing of the best response of the follower leads to a differentiable NEP formulation, which allows to characterize solutions by KKT conditions.
In Section \ref{Sec:4}, it is demonstrated that \textcolor{black}{the sequence} of Nash equilibria of the smooth problems converges also in the multipliers for decreasing smoothing parameters and that \textcolor{black}{S-stationarity} conditions are satisfied in the limit.
Eventually, this \textcolor{black}{S-stationary} point are proven to be Nash equilibria to the MLFG.
In Section \ref{Sec:5}, we introduce our general approach to computing Nash equilibria.
In Section \ref{Sec:6}, we present the numerical results of the proposed methods.

\section{Existence of Nash Equilibria for MLFGs}\label{Sec:2}

We consider an MLFG, where the follower's game is modeled by the optimization problem
\begin{equation}\label{follower}
\min\limits_{y\in \R^m} \frac{1}{2}y^\top Q_yy-b(x)^\top y \quad \st\quad y\geq l(x),
\end{equation}
\textcolor{black}{where $b_i,l_i:~\R^n\rightarrow\R$ are differentiable functions for $i=1,\dots,m$.
The matrix $Q_y\in\R^{m\times m}$ is  positive definite and diagonal, which  guarantees the existence of an explicit characterization of the minimizer, c.f. Lemma \ref{Th:bestresponse}.}
The leader problems are given for $\nu=1,\dots,N$ by
\begin{equation}\label{leader}
\min\limits_{x_\nu\in\R^{n_\nu}} \theta_\nu(x_\nu,x_{-\nu})=\frac{1}{2} x_\nu^\top Q_\nu x_{\nu}+c_\nu^\top x_\nu+a^\top \textcolor{black}{y(x)} \quad\st\quad x_\nu\in X_\nu,
\end{equation}
with nonempty, convex, and closed strategy sets $X_\nu$ which we assume to be described by smooth functions $g_\nu:\R^{n_\nu}\rightarrow\R^{m_\nu}$ such that $X_\nu=\left\{x_\nu\in\R^{n_\nu} | g_\nu(x_\nu)\leq0 \right\}$.
The quadratic objective $\theta_\nu$  is strictly convex  with $Q_\nu\in\R^{n_\nu\times n_\nu}$ symmetric positive definite, $c_\nu\in\R^{n_\nu}$, and $a\in\R^m_+$.
The multistrategy vector of all players is denoted  by $x=(x_\nu)_{\nu=1}^N\in\R^n$ and  the \textcolor{black}{rival's} strategies to player $\nu$ by $x_{-\nu}=(x_i)_{i=1,i\neq\nu}^N\in\R^{n-n_\nu}$.
\textcolor{black}{The follower's strategy \textcolor{black}{$y=y(x)$} in \eqref{leader} is a best response to the leader strategies and couples implicitly to the leaders' problems.
}

For simple notation, the model is introduced as a Multi-Leader-(Single-)Follower game.
However, this can be extended to multiple followers:
\begin{remark}(Multiple Followers)
	For $j=1,\dots,N_F$ let the $j$--th follower's optimization problem be
	\begin{equation*}
	\min\limits_{y_j\in \R^{m_j}} \frac{1}{2}y_j^\top Q^j_yy_j-b_j(x)^\top \textcolor{black}{y_j(x)} \quad \mathrm{s.t.}\quad y_j\geq l_j(x),
	\end{equation*}
	 with a positive definite diagonal matrix $Q^j_y\in\R^{m_j\times m_j}$   and $b_j,l_j:~\R^n\rightarrow\R^{m_j}$ \textcolor{black}{are} convex in every component and differentiable.
	\textcolor{black}{This structure is called potential game} and it can be equivalently reformulated as a single optimization problem by summing the objectives and concatenating the constraints, c.f. \cite{MONDERER1996}.
\end{remark}

\textcolor{black}{No matter how many followers are modeled, our approach relies on the property that the follower level can be replaced by its optimal conditions.
In addition, we need an explicit response function of the followers, which is given here in particular by the convexity properties of the follower problem and the diagonality of $Q_y$.}
Since the follower's problem has a strictly convex objective and a convex strategy set, 
we state its unique solution in the following lemma:
\begin{lemma}[Follower's Best Response]\label{Th:bestresponse}
	The follower's optimization problem \eqref{follower} has a unique solution $y^*$ for any given leader strategy vector $x\in\R^n$.
	In particular, this best response function is
	\begin{equation}\label{solY2}
			y^*(x)=\max\left\{Q_y^{-1}b(x),l(x)\right\}.
	\end{equation}

\end{lemma}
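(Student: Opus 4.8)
The plan is to exploit the diagonal structure of $Q_y$ to decouple the follower's problem into $m$ independent one-dimensional quadratic programs, solve each explicitly, and reassemble the componentwise solution. First I would record existence and uniqueness abstractly: since $Q_y$ is positive definite, the objective $f(y)=\tfrac12 y^\top Q_y y - b(x)^\top y$ is strictly convex and coercive on $\R^m$, and the feasible set $\{y\in\R^m : y\geq l(x)\}$ is nonempty (it contains $l(x)$), closed and convex; by the direct method this yields a unique minimizer $y^*$, so only its identification remains.

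Writing $Q_y=\mathrm{diag}(q_1,\dots,q_m)$ with $q_i>0$, both the objective and the constraint separate coordinatewise: $f(y)=\sum_{i=1}^m\bigl(\tfrac12 q_i y_i^2 - b_i(x)\,y_i\bigr)$ and $y\geq l(x)$ means $y_i\geq l_i(x)$ for every $i$. Hence $y^*$ is obtained by solving, for each $i$ independently, $\min_{y_i\geq l_i(x)}\tfrac12 q_i y_i^2 - b_i(x)\,y_i$. The scalar parabola $t\mapsto\tfrac12 q_i t^2 - b_i(x)\,t$ is strictly convex with unconstrained minimizer $t_i^\star = b_i(x)/q_i = (Q_y^{-1}b(x))_i$ and derivative $q_i t - b_i(x)$, so it is nondecreasing on $[t_i^\star,\infty)$. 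A short case distinction then finishes the argument: if $t_i^\star\geq l_i(x)$ the constraint is inactive and $y_i^*=t_i^\star$; if $t_i^\star<l_i(x)$ the parabola is increasing on the whole feasible ray $[l_i(x),\infty)$, so $y_i^*=l_i(x)$. In either case $y_i^*=\max\{(Q_y^{-1}b(x))_i,\,l_i(x)\}$, which is exactly \eqref{solY2} read componentwise.

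As an alternative, or a cross-check, the formula can be verified directly through the KKT system of \eqref{follower}: introducing a multiplier $\lambda\geq0$ for $y\geq l(x)$, stationarity gives $y=Q_y^{-1}(b(x)+\lambda)$, and one checks that choosing $\lambda_i=0,\ y_i=(Q_y^{-1}b(x))_i$ when $(Q_y^{-1}b(x))_i\geq l_i(x)$, and $\lambda_i = q_i l_i(x)-b_i(x)>0,\ y_i=l_i(x)$ otherwise, produces a primal--dual pair satisfying stationarity, primal and dual feasibility, and complementarity; since the constraints are affine, KKT is sufficient for global optimality, and strict convexity again gives uniqueness. I do not expect a genuine obstacle here; the only point requiring care is the monotonicity step in the active case (equivalently, checking the sign of $\lambda_i$), which is precisely what makes the componentwise maximum, rather than some other projection, the correct closed form. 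Diagonality of $Q_y$ is essential for this clean expression, since for a general positive definite $Q_y$ the minimizer would be a non-Euclidean projection of $Q_y^{-1}b(x)$ onto the box $\{y\geq l(x)\}$ with no comparably simple formula.
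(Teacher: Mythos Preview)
Your proof is correct. Your primary argument---decoupling via the diagonal structure of $Q_y$ into $m$ independent scalar problems and solving each by a direct case split on whether the unconstrained minimizer is feasible---is a genuinely different route from the paper's. The paper instead writes the KKT conditions, eliminates the multiplier to obtain the complementarity relation $0\le y-Q_y^{-1}b(x)\perp y-l(x)\ge0$, rewrites this as $0=\min\{y-Q_y^{-1}b(x),\,y-l(x)\}$, and then solves for $y$ by converting $\min$ to $\max$. Your secondary KKT cross-check is close in spirit to this, though you construct $\lambda$ explicitly rather than passing through the $\min$-reformulation.

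What each approach buys: your separability argument is more elementary and self-contained, needing no optimality theory beyond monotonicity of a scalar parabola, and it makes the role of diagonality (enabling the decomposition) completely transparent. The paper's complementarity/NCP-function rewriting, on the other hand, is the natural bridge to the smoothing scheme developed later: once the solution is characterized by $\phi(y-Q_y^{-1}b(x),\,y-l(x))=0$ with $\phi=\min$, replacing $\min$ by a smooth NCP function $\phi_\varepsilon$ is the obvious next step. So the paper's proof is already setting up the machinery of Section~\ref{Sec:3}, whereas yours is the cleaner stand-alone argument.
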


\begin{proof}
	The objective and the feasible set are convex for any $x\in\R^n$, \textcolor{black}{in particular, the constraints are linear.}
	Therefore, the KKT conditions are necessary and sufficient for a global minimizer.
	Since the objective is strictly convex, the minimizer is unique for all leader strategies $x$.
	
	To derive the structure, we apply the KKT conditions:
	There exist Lagrange multipliers $\lambda\in\R^m$ such that
	\begin{align*}
		0&=Q_yy-b(x)-\lambda,\\
		0&\leq \lambda~\bot~y-l(x)\geq 0.
	\end{align*}
	Combining these expressions yields the complementarity expression
	$ 0\leq \lambda=Q_yy-b(x) ~\bot~ y-l(x)\geq0$.
	We assumed $Q_y$ to be positive definite and diagonal; therefore, we can write equivalently as
$ 0\leq y-Q^{-1}_yb(x) ~\bot~ y-l(x)\geq0$ and
	$0=\min\left\{ y-Q^{-1}_yb(x) ,y-l(x) \right\}$.
	We obtain \eqref{solY2} by extracting $y$ and changing $\min$ to $\max$.
\end{proof}
\noindent We remark that the $\min$ and $\max$ operator are applied  componentwise to a vector.
If the data $b,l$ are chosen to be componentwise convex, then also the components of best response function $y^*(x)$ are  \textcolor{black}{also} convex in $x$ as a maximum of  convex functions.\\
  
The MLFG  is formulated as a Nash game by plugging the best response \eqref{solY2} in the leader game \eqref{leader} for $\nu=1,\dots,N$, yielding
\begin{equation}\label{NEP22}
\min\limits_{x_\nu} \frac{1}{2} x_\nu^\top Q_\nu x_{\nu}+c_\nu^\top x_\nu+\sum\limits_{i=1}^{m}a_i \max\left\{\left(Q_y^{-1}b(x)\right)_i,l_i(x)\right\} \quad\st\quad x_\nu\in X_\nu.\tag{NEP}
\end{equation}
Each optimization problem has a nonsmooth but convex objective \textcolor{black}{(if $b_i,l_i$ are convex)} and a convex strategy set.
For compact strategy sets, we can prove the existence of Nash equilibria in the following theorem.
\begin{theorem}[Existence of Nash Equilibria for Compact Strategy Sets]\label{Th:MainExistence}
	Assume that the nonsmooth Nash equilibrium problem in \eqref{NEP22} has a  convex and compact joint strategy set $X=X_1\times \dots\times X_N$, where all $X_\nu$ are nonempty.
\textcolor{black}{	Further, assume that $b,l$ are componentwise convex functions on $X$.}
	Then there exists at least one Nash equilibrium.
	
	Therefore, the quadratic multi-leader-follower game given by (\ref{follower}) and (\ref{leader}) has at least one Nash equilibrium.
\end{theorem}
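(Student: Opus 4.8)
The plan is to recognize \eqref{NEP22} as a standard $N$-person concave game (in minimization form, a convex game) and to invoke the classical Debreu--Fan--Glicksberg existence theorem, or equivalently to run a Kakutani fixed-point argument on the best-response correspondence. First I would record the three structural facts supplied by the hypotheses. \emph{(i) Continuity of the reduced payoffs.} Since $Q_y$ is positive definite and diagonal, $\bigl(Q_y^{-1}b(x)\bigr)_i = b_i(x)/(Q_y)_{ii}$, and the $b_i,l_i$ are differentiable, hence continuous; the componentwise $\max$ of continuous functions is continuous, so $y^\ast(x)$ from \eqref{solY2} is continuous on $X$, and therefore each reduced objective $\hat\theta_\nu(x):=\tfrac12 x_\nu^\top Q_\nu x_\nu + c_\nu^\top x_\nu + a^\top y^\ast(x)$ is continuous on $X$. \emph{(ii) Convexity in the own variable.} For fixed $x_{-\nu}$ the quadratic term is (strictly) convex in $x_\nu$ because $Q_\nu\succ 0$; each $\bigl(Q_y^{-1}b(x)\bigr)_i$ and each $l_i(x)$ is convex in $x$ by hypothesis, their pointwise maximum is convex, and $a_i\ge 0$, so $a^\top y^\ast(x)$ is convex in $x$; hence $\hat\theta_\nu(\cdot,x_{-\nu})$ is convex on $X_\nu$. \emph{(iii) Strategy sets.} By assumption each $X_\nu$ is nonempty, convex and compact, and hence so is the product $X$.

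Given (i)--(iii) I would conclude existence in one of two equivalent ways. The direct route cites the Debreu--Fan--Glicksberg theorem: an $N$-person game with nonempty convex compact strategy sets whose payoffs are continuous in the joint strategy and quasi-convex (here convex) in each player's own strategy possesses a pure-strategy Nash equilibrium. The self-contained route defines the best-response correspondence $\mathcal B(x) = \prod_{\nu=1}^N \arg\min_{x_\nu\in X_\nu}\hat\theta_\nu(x_\nu,x_{-\nu})$: each factor is nonempty because a continuous function attains its minimum on the compact set $X_\nu$, convex-valued because $\hat\theta_\nu(\cdot,x_{-\nu})$ and $X_\nu$ are convex, and upper hemicontinuous with closed graph by Berge's maximum theorem, using the continuity from (i). Kakutani's fixed-point theorem then yields $x^\ast\in\mathcal B(x^\ast)$, i.e.\ a Nash equilibrium of \eqref{NEP22}.

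Finally I would translate the equilibrium back to the MLFG. By Lemma~\ref{Th:bestresponse}, for every $x$ the follower's problem \eqref{follower} has the unique minimizer $y^\ast(x)$ given by \eqref{solY2}, and \eqref{NEP22} is precisely \eqref{leader} with this $y^\ast(x)$ substituted. Hence the pair $(x^\ast, y^\ast(x^\ast))$ satisfies both equilibrium requirements: no leader $\nu$ can decrease $\theta_\nu$ by a unilateral deviation in $X_\nu$ (the fixed-point property of $x^\ast$), and the follower responds optimally to $x^\ast$ (Lemma~\ref{Th:bestresponse}); thus $(x^\ast,y^\ast(x^\ast))$ is a Nash equilibrium of the game \eqref{follower}--\eqref{leader}.

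I expect the only mildly delicate point to be the continuity / closed-graph verification underpinning the fixed-point step --- in particular, checking that the nonsmoothness of $y^\ast(x)$ (the componentwise $\max$) causes no difficulty. It does not: the $\max$ preserves both continuity in $x$ and convexity in $x_\nu$, which is all the existence theorem demands; strict convexity of the quadratic term is not needed here (it will matter later, for the uniqueness statement in the smoothed formulation). Everything else is a routine check that the hypotheses of a standard theorem are met.
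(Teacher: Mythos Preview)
Your proposal is correct and takes essentially the same approach as the paper: verify that the reduced objectives are continuous in $x$ and convex in the own variable $x_\nu$ (quadratic term plus nonnegative combination of maxima of convex functions), that the $X_\nu$ are nonempty convex compact, and then invoke a classical existence theorem for concave/convex games. The paper's proof is terser and cites Nikaido--Isoda (1955) rather than Debreu--Fan--Glicksberg, but these are the same family of results; your additional Kakutani/Berge spell-out and the explicit translation back to the MLFG via Lemma~\ref{Th:bestresponse} are welcome elaborations but do not differ in substance.
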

\begin{proof}
	We formulated the MLFG as a convex Nash equilibrium problem (\ref{NEP22}), especially the objectives are continuous in $(x_\nu,x_{-\nu})$ and convex in $x_\nu$ because they are a sum of a strictly convex quadratic term and the maximum of two convex functions.
	Furthermore, we assumed  the admissible strategy sets $X_\nu$ to be nonempty, convex, and compact. 
	Therefore, the conditions of \cite[Theorem 3.1]{nikaido1955note} are fulfilled.
\end{proof}
Alternatively, Theorem \ref{Th:constr} includes a constructive \textcolor{black}{proof of existence}.
\textcolor{black}{In addition to convexity, both existence results rely on compactness of the strategy sets to ensure existence of global minimizer to the  players' optimization problems.}
However, this is clearly not a necessary condition for the existence or uniqueness of Nash equilibria to (\ref{follower},\ref{leader}).

In the remainder of the section, we exchange the compactness assumption with \textcolor{black}{a} coercivity assumption on the best response of the followers exploiting the potentiality of the \textcolor{black}{leaders'} objectives.
Therefore, let the generalized potential function be
\begin{align*}
	\Theta(x)&=\sum\limits_{\nu=1}^{N}\left[\frac{1}{2} x_\nu^\top Q_\nu x_{\nu}+c_\nu^\top x_\nu\right] +\varphi(x),\\
	\varphi(x)&=\sum\limits_{i=1}^{m}a_i \max\left\{\left(Q_y^{-1}b(x)\right)_i,l_i(x)\right\}.
\end{align*}
With this, we can formulate the major assumption:
\begin{assumption}\label{Ass:coerc}
	Assume, \textcolor{black}{there exists} $\rho\geq 0$ \textcolor{black}{and  $\omega_1,\omega_2\in\R$} such that for all $x\in X$ with $\|x\|>\rho$  it holds
	$$\textcolor{black}{\min\left\{0,\varphi(x)\right\}}\geq \omega_1 \|x\| +\omega_2.$$
\end{assumption}
\textcolor{black}{This assumption guarantees that the coercivity of  generalized potential $\Theta$ is preserved.
It includes linear functions $\varphi(x)$ therefore especially settings where $b$ and $l$ are linear.
It excludes polynomials of higher degree which are not bounded from below.} 

\textcolor{black}{
\begin{lemma}[Coercivity of the Generalized Potential $\Theta$]
Let Assumption \ref{Ass:coerc} hold, then the generalized potential function $\Theta$ is coercive.
\end{lemma}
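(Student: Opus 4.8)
The plan is to separate $\Theta$ into the block-diagonal quadratic-plus-linear part coming from the leaders' own costs and the coupling term $\varphi$, bound each piece below, and add the bounds. First I would exploit that every $Q_\nu$ is symmetric positive definite: writing $\lambda_{\min}>0$ for the smallest eigenvalue occurring among $Q_1,\dots,Q_N$, one gets $\sum_{\nu=1}^N \tfrac12 x_\nu^\top Q_\nu x_\nu \ge \tfrac12\lambda_{\min}\|x\|^2$, while Cauchy--Schwarz gives $\sum_{\nu=1}^N c_\nu^\top x_\nu \ge -\|c\|\,\|x\|$ with $c=(c_\nu)_{\nu=1}^N$. Hence the leaders' part of $\Theta$ is bounded below by the (coercive) scalar map $t\mapsto \tfrac12\lambda_{\min}t^2 - \|c\|\,t$ evaluated at $t=\|x\|$.

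Next I would bring in Assumption \ref{Ass:coerc}. Since $\varphi(x)\ge\min\{0,\varphi(x)\}$ trivially, the assumption gives $\varphi(x)\ge\omega_1\|x\|+\omega_2$ for every $x\in X$ with $\|x\|>\rho$. Adding this to the previous estimate yields, for all such $x$,
\[
\Theta(x)\ \ge\ \tfrac12\lambda_{\min}\|x\|^2+(\omega_1-\|c\|)\|x\|+\omega_2,
\]
and because $\lambda_{\min}>0$ the right-hand side tends to $+\infty$ as $\|x\|\to\infty$. On the bounded set $\{x\in X:\|x\|\le\rho\}$ the function $\Theta$ is continuous (it is a finite sum of maxima of continuous functions plus a polynomial, hence finite-valued and continuous everywhere), so the displayed inequality on the complement of a bounded set is precisely the statement that $\Theta$ is coercive on $X$.

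I do not expect a genuine obstacle here; the one point to be careful about is that the linear-in-$\|x\|$ lower bound permitted by Assumption \ref{Ass:coerc} does not interfere with coercivity — this holds because $\omega_1$ is a fixed constant while the quadratic coefficient $\tfrac12\lambda_{\min}$ is strictly positive, so the term $(\omega_1-\|c\|)\|x\|$ is asymptotically dominated regardless of the sign of $\omega_1-\|c\|$. A secondary point worth one sentence is checking that $\varphi$ is finite everywhere, so that $\Theta$ never takes the value $-\infty$ and the coercivity claim is not vacuous.
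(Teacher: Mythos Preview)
Your proposal is correct and follows essentially the same approach as the paper: both arguments bound the quadratic part below by $\tfrac12\lambda_{\min}\|x\|^2$ (the paper calls this eigenvalue $\mu$, taken from the block-diagonal matrix $Q=\mathrm{diag}(Q_1,\dots,Q_N)$, which is the same number), control the linear part by Cauchy--Schwarz, invoke Assumption~\ref{Ass:coerc} for $\varphi$, and observe that the quadratic dominates. Your version is in fact slightly more careful in explicitly separating the region $\|x\|\le\rho$ and in remarking that $\varphi$ is finite, points the paper leaves implicit.
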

\begin{proof}
We introduce a short hand for the block diagonal matrix $Q=\mathrm{diag}(Q_1,\dots,Q_N)$ and the concatenation of $c_1,\dots,c_N$ as $c$. 
Then we have
\begin{align*}
  \Theta(x)=x^\top Q x+c^\top x +\varphi(x)\geq \mu \|x\|^2 -\|c\|\,\|x\|+\min\left\{0,\varphi(x)\right\},
\end{align*}
where $\mu$ is the smallest eigenvalue of $Q$, recall that $\mu>0$ since all $Q_\nu$ are positive definite.
We rewrite the the right hand side with Assumption \ref{Ass:coerc} and get
\begin{align*}
\Theta(x)\geq \mu \|x\|^2 -\|c\|\,\|x\|+\omega_1\|x\|+\omega_2,
\end{align*}
As $\|x\|\rightarrow\infty$, the right hand side is dominated by a quadratic term with $\mu \|x\|^2\rightarrow\infty$ and we conclude
$$\lim\limits_{\|x\|\rightarrow\infty}\Theta(x)=\infty,$$
i.e. the generalized potential function is coercive.
\end{proof}}
The Assumption \ref{Ass:coerc} and the concluded coercivity property allows us to formulate an existence result without requiring compactness, \textcolor{black}{similarly to the standard result of Monderer and Shapley in \cite{MONDERER1996}}.

\begin{theorem}[Existence  of Nash Equilibria for Coercive Generalized Potential]\label{Th:coerc}
	Let  Assumption \ref{Ass:coerc} hold and   $x^*$ be a global minimizer of the generalized potential $\Theta$, then $x^*$ is also a Nash equilibrium of the MLFG (\ref{follower}-\ref{leader}).
\end{theorem}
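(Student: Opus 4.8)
The plan is to recognize the Nash equilibrium problem \eqref{NEP22} as an \emph{exact potential game} with potential $\Theta$ and then invoke the classical fact that any minimizer of the potential over the joint strategy set is a Nash equilibrium (as in \cite{MONDERER1996}). Writing the $\nu$-th player's objective in \eqref{NEP22} as $\theta_\nu(x_\nu,x_{-\nu})=\tfrac12 x_\nu^\top Q_\nu x_\nu+c_\nu^\top x_\nu+\varphi(x)$, one has
\[
\Theta(x)-\theta_\nu(x_\nu,x_{-\nu})=\sum_{i\neq\nu}\Bigl[\tfrac12 x_i^\top Q_i x_i+c_i^\top x_i\Bigr],
\]
which does not depend on $x_\nu$; hence for every $\nu$, every fixed $x_{-\nu}$, and all $x_\nu,x_\nu'\in X_\nu$ one obtains the exact-potential identity $\theta_\nu(x_\nu',x_{-\nu})-\theta_\nu(x_\nu,x_{-\nu})=\Theta(x_\nu',x_{-\nu})-\Theta(x_\nu,x_{-\nu})$. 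This is the only structural computation required.

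Before using this I would record that $x^*$ is well-defined: $\varphi$ is continuous as a finite componentwise maximum of continuous functions, so $\Theta$ is continuous; the joint set $X=X_1\times\cdots\times X_N$ is nonempty and closed; and the preceding lemma yields coercivity of $\Theta$ on $X$ under Assumption~\ref{Ass:coerc}. A continuous coercive function on a nonempty closed set attains its infimum, so a global minimizer $x^*\in X$ exists.

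The core step is then short. Fix a player $\nu$ and an arbitrary admissible unilateral deviation $x_\nu\in X_\nu$; since $(x_\nu,x^*_{-\nu})\in X$, global minimality of $x^*$ gives $\Theta(x_\nu,x^*_{-\nu})-\Theta(x^*)\ge 0$, and by the exact-potential identity this equals $\theta_\nu(x_\nu,x^*_{-\nu})-\theta_\nu(x^*_\nu,x^*_{-\nu})$. Thus $x^*_\nu$ minimizes player $\nu$'s problem given the rivals' strategies $x^*_{-\nu}$, and since $\nu$ was arbitrary, $x^*$ is a Nash equilibrium of \eqref{NEP22}. Reinserting the follower's best response $y^*(x^*)$ from Lemma~\ref{Th:bestresponse} then shows that $(x^*,y^*(x^*))$ is a Nash equilibrium of the original MLFG \eqref{follower}--\eqref{leader}.

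I do not expect a genuine obstacle, since this is in essence the Monderer--Shapley reduction for potential games. The one point needing care is that the minimization defining $x^*$ must be over the feasible set $X$ rather than over $\R^n$: it is precisely feasibility of $(x_\nu,x^*_{-\nu})$ for each deviation that makes the inequality $\Theta(x_\nu,x^*_{-\nu})\ge\Theta(x^*)$ usable, and it is coercivity together with closedness of $X$ --- rather than compactness --- that keeps the minimizer well-defined.
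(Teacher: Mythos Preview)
Your proof is correct and follows essentially the same approach as the paper: both exploit that $\Theta(x)-\theta_\nu(x_\nu,x_{-\nu})=\sum_{i\neq\nu}\bigl[\tfrac12 x_i^\top Q_i x_i+c_i^\top x_i\bigr]$ is independent of $x_\nu$, so a global minimizer of $\Theta$ over $X$ cannot admit a profitable unilateral deviation. The only cosmetic difference is that the paper phrases the final step by contradiction, whereas you invoke the exact-potential identity directly; the underlying computation is identical.
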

\begin{proof}
	Since  Assumption \ref{Ass:coerc} holds, the generalized potential $\Theta$ admits a global minimizer on any closed joint strategy set $X$.
	We demonstrate that $x^*$ is a Nash equilibrium by contradiction.
	Therefore, assume the converse is true, i.e. \textcolor{black}{there exists} a leader $\hat{\nu}$ and a strategy $\hat{x}_{\hat{\nu}}\in X_{\hat{\nu}}$ such that:
	$$\theta_{\hat{\nu}}(\hat{x}_{\hat{\nu}},x^*_{-\hat{\nu}})< \theta_{\hat{\nu}}(x^*_{\hat{\nu}},x^*_{-\hat{\nu}}),$$
or more detailed,
$$ \frac{1}{2}\hat{x}_{\hat{\nu}}^\top Q_{\hat{\nu}}\hat{x}_{\hat{\nu}}+c_{\hat{\nu}}^\top \hat{x}_{\hat{\nu}}+\varphi(\hat{x}_{\hat{\nu}},{x}^*_{-\hat{\nu}})<\frac{1}{2}x^{*\top}_{\hat{\nu}} Q_{\hat{\nu}}{x}^*_{\hat{\nu}}+c_{\hat{\nu}}^\top {x}^*_{\hat{\nu}}+\varphi({x}^*_{\hat{\nu}},{x}^*_{-\hat{\nu}}).
$$
With this, we can overestimate the following expression
\begin{align*}
	\Theta(\hat{x}_{\hat{\nu}},{x}^*_{-\hat{\nu}})=& \sum\limits_{\nu=1,\nu\neq \hat{\nu}}^{N}\left[\frac{1}{2} x^{*\top}_\nu Q_\nu x^*_{\nu}+c_\nu^\top x^*_\nu\right] + \frac{1}{2}\hat{x}_{\hat{\nu}}^\top Q_{\hat{\nu}}\hat{x}_{\hat{\nu}}+c_{\hat{\nu}}^\top \hat{x}_{\hat{\nu}}+\varphi(\hat{x}_{\hat{\nu}},{x}^*_{-\hat{\nu}})\\
	<&  \sum\limits_{\nu=1}^{N}\left[\frac{1}{2} x^{*\top}_\nu Q_\nu x^*_{\nu}+c_\nu^\top x^*_\nu\right] + \varphi({x}^*_{\hat{\nu}},{x}^*_{-\hat{\nu}})=\Theta({x}^*_{\hat{\nu}},{x}^*_{-\hat{\nu}}),
\end{align*}
which is a \textcolor{black}{contradiction to} the optimality of $x^*$.
\end{proof}
In the remainder of this article, we assume  $b$ and $l$ to be linear:
\begin{assumption}\label{Ass:Lin}
	\textcolor{black}{The data is assumed to be linear:
		\begin{equation*}
		b(x)=B^\top x \quad\mbox{and}\quad l(x)=L^\top x,
		\end{equation*}
		where $B,L\in\R^{n\times m}$,  $B_{:,i},L_{:,i}\in\R^{n}$ denotes the $i$-th column, and $B_{\nu,:},L_{\nu,:}$ denote their submatrices of the rows referring to $x_\nu$.}
\end{assumption}
\textcolor{black}{This linearity combined with Assumption \ref{Ass:coerc} directly yields existence of Nash equilibria.
\begin{lemma}[Existence  of Nash Equilibria for Linear Data $b,l$]
	If \textcolor{black}{Assumptions} \ref{Ass:coerc}  and \ref{Ass:Lin} hold,  then \textcolor{black}{there exists} a Nash equilibrium to the MLFG.
	\end{lemma}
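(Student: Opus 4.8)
The plan is to obtain this lemma as a direct consequence of the two facts already established in this section: the preceding Coercivity Lemma, which under Assumption~\ref{Ass:coerc} guarantees that the generalized potential $\Theta$ is coercive, and Theorem~\ref{Th:coerc}, which under the same assumption states that \emph{any} global minimizer of $\Theta$ over the joint strategy set $X$ is a Nash equilibrium of the MLFG \eqref{follower}--\eqref{leader}. The only thing missing for a complete argument is therefore the \emph{existence} of a global minimizer of $\Theta$ on $X$, and this is precisely the point at which the linearity Assumption~\ref{Ass:Lin} is used.

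Concretely, I would proceed in four short steps. First, with $b(x)=B^\top x$ and $l(x)=L^\top x$ affine, the function $\varphi(x)=\sum_{i=1}^m a_i\max\{(Q_y^{-1}B^\top x)_i,(L^\top x)_i\}$ is a nonnegative combination (recall $a\in\R^m_+$) of pointwise maxima of affine functions, hence convex and in particular continuous on $\R^n$; consequently $\Theta$, being the sum of the smooth quadratic leader terms and $\varphi$, is continuous on all of $\R^n$. Second, by the Coercivity Lemma, $\lim_{\|x\|\to\infty}\Theta(x)=\infty$. Third, fix any $\bar x\in X$ (possible, since each $X_\nu$ is nonempty) and consider the sublevel set $S=\{x\in X:\Theta(x)\leq\Theta(\bar x)\}$: it is closed because $X$ is closed and $\Theta$ is continuous, and bounded because $\Theta$ is coercive, hence $S$ is nonempty and compact, so $\Theta$ attains a minimum on $S$, which is a global minimum of $\Theta$ over $X$; call such a minimizer $x^*$. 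Fourth, Theorem~\ref{Th:coerc} applies verbatim and yields that $x^*$ is a Nash equilibrium of the quadratic MLFG.

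I do not expect a genuine obstacle here; the statement is essentially a corollary that packages the Coercivity Lemma together with Theorem~\ref{Th:coerc}. The only point that deserves a line of care is that coercivity alone does not produce a minimizer on an unbounded closed set — one additionally needs lower semicontinuity of $\Theta$ — and it is exactly the linearity of $b$ and $l$ (through continuity of $\varphi$) that supplies it, while also making the linear growth bound imposed in Assumption~\ref{Ass:coerc} the natural hypothesis in this setting.
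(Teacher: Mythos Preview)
Your argument is correct and, like the paper, reduces everything to Theorem~\ref{Th:coerc}. The difference lies in what role you assign to linearity. You invoke Assumption~\ref{Ass:Lin} to obtain continuity of $\varphi$ (and hence of $\Theta$) so that a minimizer exists; but in fact $\varphi$ is already continuous whenever $b$ and $l$ are continuous, so linearity is not the ingredient that secures lower semicontinuity. The paper instead uses linearity in a sharper way: with $b(x)=B^\top x$ and $l(x)=L^\top x$, each summand $a_i\max\{(Q_y^{-1}B^\top x)_i,(L^\top x)_i\}$ is the maximum of two linear functions and therefore bounded below by a linear function, so Assumption~\ref{Ass:coerc} is automatically satisfied. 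In other words, the paper observes that Assumption~\ref{Ass:Lin} alone already yields the existence result, whereas your argument treats the two assumptions as independent inputs. Both routes are valid; the paper's is shorter and explains why the lemma is stated for linear data, while yours makes the existence-of-minimizer step more explicit.
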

\begin{proof}
	With linear data $b,l$, $\varphi$ gets: 
	$$\varphi(x)=\sum\limits_{i=1}^{m}a_i \max\left\{\left(Q_y^{-1}B^\top x\right)_i,\left(L^\top x\right)_i\right\},$$
	which is a linear combination of the maxima of two linear functions.
	Thus, it can be underestimated by an other linear function.
	Therefore, Assumption \ref{Ass:coerc} holds and Theorem \ref{Th:coerc} applies.
\end{proof}
}
\textcolor{black}{In \cite{Steffensen2019P} there is a counterexample for the structure of  MLFG in (\ref{follower},\ref{leader}) with linear data $b,l$.
The data and $X$ are such that the non-differentiable parts of the objectives of the NEP reformulation lie either on the boundary of $X$ or outside the feasible domain.
Or with other words, the $\max$ operator can be uniquely evaluated  on the interior of the feasible set.
In particular, existence and uniqueness of Nash equilibria is derived by a formulation as variational inequality.
In case of diagonal $Q_\nu$, the Nash equilibrium can be explicitly computed in the presented setting.
}

\section{Existence and Uniqueness of Nash Equilibria of Smoothed MLFG} \label{Sec:3}
We formulated the MLFG as a convex but nonsmooth Nash game and proved existence of  equilibria in case of compact strategy sets.
In this section,  we  relax the nonsmoothness of the follower's best response.
For the resulting smooth convex Nash equilibrium problem, we show existence and uniqueness for more general strategy sets.

Similarly to Lemma \ref{Th:bestresponse}, we formulate the follower's KKT conditions with the linear data:
\begin{equation}\label{Eq:KKT}
0\leq y-Q^{-1}_yB^\top x ~\bot~ y-L^\top x\geq0,
\end{equation}
where we replace the complementarity expression by a formulation with a nonlinear complementarity  (NCP) function.
We consider smooth NCP functions of the following type with smoothing parameter $\varepsilon>0$:
\begin{equation}\label{NCPsmooth}
\phi_\varepsilon(\alpha,\beta)=\alpha+\beta-\tilde{\phi_\varepsilon}(\alpha-\beta).
\end{equation}
\textcolor{black}{An example for a smooth NCP function with convex $\tilde{\phi}_\varepsilon$ is  $\phi_\varepsilon^{p}(\alpha,\beta)=\alpha+\beta-\sqrt[p]{(\alpha-\beta)^p+(2\varepsilon)^p}$ for an even number $p\in\mathbb{N}$, this coincides for $p=2$ with the smooth minimum function.
This structure \textcolor{black}{captures also} NCP functions related to the relaxation scheme for mathematical programs with equilibrium constraints (MPEC) introduced by Steffensen and Ulbrich \cite{Sonja10}.
Besides \textcolor{black}{differentiability}  in $\alpha$ and $\beta$, we require the smooth NCP function $\phi_\varepsilon$ to be continuous in the smoothing parameter $\varepsilon$.}

If we apply (\ref{NCPsmooth}) on the KKT system \eqref{Eq:KKT}, we obtain (component wise) for $\varepsilon>0$ the expression
\begin{align}
0&= y - Q^{-1}_yB^\top x + y-L^\top x -\tilde{\phi_\varepsilon}\left(y-Q^{-1}_yB^\top x - y+L^\top x\right).
\end{align}
Therefore, we write the best response function as:
\begin{equation}\label{Eq:FollowerSolution}
y_\varepsilon(x)=\frac{1}{2}\left[\left(L^\top +Q^{-1}_yB^\top\right) x +\tilde{\phi_\varepsilon}\left(\left(L^\top-Q^{-1}_yB^\top\right) x\right)\right].
\end{equation}

The smoothed best response function $y_\varepsilon$ plugged  into the leader's objectives yields  a smooth Nash equilibrium problem with positive smoothing parameter $\varepsilon$; for $\nu=1,\dots, N$, we have
\begin{align}\label{Eq:SmoothNEP} \tag{\mbox{NEP$(\varepsilon) $}}
\begin{split}
\min\limits_{x_\nu\in\R^{n_\nu}}  \theta^\varepsilon_\nu(x_\nu,x_{-\nu})=&\frac{1}{2} x_\nu^\top Q_\nu x_{\nu}+c_\nu^\top x_\nu\\ &+ \frac{1}{2}\sum\limits_{i=1}^m a_i \left[\left(L^\top +Q^{-1}_yB^\top\right) x +\tilde{\phi_\varepsilon}\left(\left(L^\top -Q^{-1}_yB^\top\right) x\right)\right]_i \\
\st ~ x_\nu\in X_\nu. \quad\quad&
\end{split}
\end{align}
For this game, we can state an existence and uniqueness theorem.

\begin{theorem}[Existence and Uniqueness]
	Assume that the  Nash equilibrium problem  \ref{Eq:SmoothNEP} has a  convex and closed strategy set $X=X_1\times\dots \times X_N$, where all $X_\nu$ are nonempty, and $\tilde{\phi_\varepsilon}$ is convex and smooth. 
	Then the  Nash equilibrium problem has a unique  equilibrium for every smoothing parameter $\varepsilon>0$.
\end{theorem}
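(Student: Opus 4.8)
The plan is to reformulate \ref{Eq:SmoothNEP} as a strongly monotone variational inequality and invoke the classical existence–uniqueness theory for such problems, which — unlike the Nikaido–Isoda argument behind Theorem~\ref{Th:MainExistence} — does not require compactness of the strategy sets.

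First I would record two structural facts. The coupling term
$\Phi(x):=\tfrac12\sum_{i=1}^m a_i\big[(L^\top+Q_y^{-1}B^\top)x+\tilde\phi_\varepsilon((L^\top-Q_y^{-1}B^\top)x)\big]_i$
is \emph{the same function of the full vector $x$ for every leader}, so that $\theta^\varepsilon_\nu(x_\nu,x_{-\nu})=\tfrac12 x_\nu^\top Q_\nu x_\nu+c_\nu^\top x_\nu+\Phi(x)$ for each $\nu$; in particular \ref{Eq:SmoothNEP} is an exact potential game with potential $\Theta_\varepsilon(x)=\sum_{\nu}\big(\tfrac12 x_\nu^\top Q_\nu x_\nu+c_\nu^\top x_\nu\big)+\Phi(x)$. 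Moreover $\Phi$ is convex: its first summand is linear in $x$, and its second is a nonnegative combination (using $a\in\R^m_+$) of compositions of the convex function $\tilde\phi_\varepsilon$ with affine maps. Consequently, for fixed $x_{-\nu}$ the map $x_\nu\mapsto\theta^\varepsilon_\nu(x_\nu,x_{-\nu})$ is the sum of the strictly convex quadratic $\tfrac12 x_\nu^\top Q_\nu x_\nu+c_\nu^\top x_\nu$ (with $Q_\nu$ symmetric positive definite) and the convex function $x_\nu\mapsto\Phi(x_\nu,x_{-\nu})$, so each leader solves a smooth convex program over the nonempty, closed, convex set $X_\nu$.

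Because every leader's problem is convex and $X=X_1\times\cdots\times X_N$ has product structure, $x^*\in X$ is a Nash equilibrium of \ref{Eq:SmoothNEP} if and only if it solves the variational inequality $\mathrm{VI}(F,X)$ with $F(x):=\big(\nabla_{x_\nu}\theta^\varepsilon_\nu(x)\big)_{\nu=1}^N$: the equilibrium condition for leader $\nu$ is, by convexity, equivalent to $\langle\nabla_{x_\nu}\theta^\varepsilon_\nu(x^*),\,x_\nu-x^*_\nu\rangle\ge0$ for all $x_\nu\in X_\nu$, and stacking these $N$ inequalities gives exactly $\langle F(x^*),\,x-x^*\rangle\ge0$ for all $x\in X$. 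Writing $Q=\mathrm{diag}(Q_1,\dots,Q_N)$ and concatenating the $c_\nu$ into $c$, the potential identity gives $F(x)=Qx+c+\nabla\Phi(x)$. The affine map $x\mapsto Qx+c$ is strongly monotone with modulus $\mu>0$, the smallest eigenvalue of $Q$ (positive since every $Q_\nu$ is positive definite), while $\nabla\Phi$ is the gradient of a convex $C^1$ function, hence monotone; therefore $F$ is strongly monotone on $\R^n$ with modulus $\mu$. By the classical result on strongly monotone variational inequalities over a nonempty closed convex set (for instance via the Banach fixed‑point theorem applied to the projected residual map $x\mapsto P_X(x-\gamma F(x))$ for sufficiently small $\gamma>0$), $\mathrm{VI}(F,X)$ has exactly one solution, for every $\varepsilon>0$; by the equivalence above this is the unique Nash equilibrium of \ref{Eq:SmoothNEP}. (Equivalently, one may observe that the unique solution of this VI is the unique minimizer of the strictly convex potential $\Theta_\varepsilon$ over $X$.)

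The only delicate point is the strong monotonicity of $F$: it is \emph{not} automatic from the strict convexity of the individual objectives, since in general the game pseudo‑gradient mixes the players' partial gradients of a coupling term and need not be monotone. It works here precisely because the coupling term $\Phi$ is identical for all leaders — so $\nabla\Phi$ enters $F$ as the monotone gradient of a single convex function — and because the nonnegativity of $a$ together with the positive‑definite diagonal $Q_y$ makes $\Phi$ genuinely convex; the positive definiteness of the $Q_\nu$ then supplies the strong part. Everything else is routine.
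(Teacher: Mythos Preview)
Your proof is correct and follows essentially the same route as the paper: reformulate \ref{Eq:SmoothNEP} as a variational inequality and show the game pseudo-gradient is strongly (uniformly) monotone, then invoke the standard VI existence--uniqueness result on a nonempty closed convex set. The only cosmetic difference is that you package the monotonicity of the coupling part via the potential-game observation $F=\nabla\Theta_\varepsilon$ (so $\nabla\Phi$ is monotone as the gradient of a convex function), whereas the paper computes the same inequality directly via the mean value theorem applied to $\tilde\phi'_\varepsilon$.
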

\begin{proof}
	Due to \cite[Proposition 1.4.2]{facchinei2007finite} and the convexity assumptions, a strategy $x\in X$ is a Nash equilibrium if and only if $x$ solves the variational inequality $\mathrm{VI}(X,{\theta'}^\varepsilon)$, where
	$${\theta'}^\varepsilon(x)=\begin{pmatrix} \nabla_{x_1} \theta^\varepsilon_1 (x_1,x_{-1})\\ \vdots \\\nabla_{x_N} \theta^\varepsilon_N (x_N,x_{-N})
	\end{pmatrix}. $$
	Since we assumed a convex and closed strategy set $X$, the variational inequality has a unique solution if ${\theta'}^\varepsilon$ is uniformly monotone \cite[Theorem 2.3.3]{facchinei2007finite}.
	The remainder of the proof demonstrates this.
	
	We introduce a short hand for the linear term $A=L^\top -Q_y^{-1}B^\top $, the block diagonal matrix $Q=\mathrm{diag}(Q_1,\dots,Q_N)$, and the concatenation of $c_1,\dots,c_N$ as $c$. 
	Let $x,\hat{x}\in X$ be distinct, then we have
	\begin{align*}
		&(x-\hat{x})^\top \left({\theta'}^\varepsilon(x)-{\theta'}^\varepsilon(\hat{x})\right), \\
		=&(x-\hat{x})^\top  \Bigg[Qx+c + \frac{1}{2}(L^\top +Q_y^{-1}B^\top )^\top a+\frac{1}{2}\sum\limits_{i=1}^m a_i A_{i,:}^\top  \tilde{\phi}_\varepsilon'((Ax)_i)  \\
		 &   -\left(Q\hat{x} +c+\frac{1}{2}(L^\top +Q_y^{-1}B^\top )^\top a+\frac{1}{2}\sum\limits_{i=1}^m a_i A_{i,:}^\top  \tilde{\phi}_\varepsilon'((A\hat{x})_i)\right)\Bigg], \\
		  = & (x-\hat{x})^\top   \left[ Q(x-\hat{x}) + \frac{1}{2}\sum\limits_{i=1}^m a_i A_{i,:}^\top  \left(\tilde{\phi}_\varepsilon'((Ax)_i)-\tilde{\phi}_\varepsilon'((A\hat{x})_i)\right) \right].
	\end{align*}
	We apply the mean value theorem for $\tilde{\phi}_\varepsilon'$, thus there exists a $t\in[0,1]$ such that we derive from the last equation with $\zeta_i=t(Ax)_i+(1-t)(A\hat{x})_i$ a lower \textcolor{black}{bound}:
	\begin{align*}
		&(x-\hat{x})^\top \left({\theta'}^\varepsilon(x)-{\theta'}^\varepsilon(\hat{x})\right),\\
		 =&  (x-\hat{x})^\top    Q(x-\hat{x}) + \frac{1}{2}\sum\limits_{i=1}^m a_i  \underbrace{\tilde{\phi}_\varepsilon''(\zeta_i)}_{\geq 0} \underbrace{(x-\hat{x})^\top  A_{i,:}^\top  A_{i,:} (x-\hat{x})}_{\geq 0}, \\
		\geq
		& (x-\hat{x})^\top    Q(x-\hat{x}) \geq \mu\|x-\hat{x}\|^2,
	\end{align*}
	where $\mu$ is the  smallest eigenvalue of  $Q$. It is positive because $Q$ is symmetric positive definite.
Thus $ {\theta'}^\varepsilon$ is uniformly monotone.
	Therefore, the variational inequality $\mathrm{VI}(X,{\theta'}^\varepsilon)$ has exactly one solution which is the unique Nash equilibrium of the smoothed game  \ref{Eq:SmoothNEP}.
	\end{proof}

\textcolor{black}{The remainder of this section is dedicated to the characterization of the unique Nash equilibrium.}
Therefore, we recall the assumptions made for the  smooth Nash equilibrium problem \ref{Eq:SmoothNEP} for positive smoothing parameter $\varepsilon$:
\begin{assumption}\label{Assump:SNEP}
	We assume that the data of the MLFG and its smooth Nash game reformulation satisfy  for $\nu=1,\dots, N$ the following properties.
	\begin{itemize}
		\item $Q_\nu\in\R^{n_\nu\times n_\nu}$ symmetric positive definite,
		\item $c_\nu\in\R^{n_\nu}$,
		\item $a\in\R_+^m$,
		\item $X_\nu=\left\{ x_\nu\in\R^{n_\nu} | g_\nu(x_\nu)\leq 0 \right\}\subseteq \R^{n_\nu}$ nonempty, convex, and closed;
		\item $ g_\nu: \R^{n_\nu}\rightarrow\R^{m_\nu} $  at least twice differentiable, and convex;
		\item $Q_y\in\R^{m\times m}$  positive definite and diagonal,
		\item $b(x)=B^\top x$, $l(x)=L^\top x$, with $B,L \in \R^{n\times m}$,
		\item smooth NCP function of the form $ \phi_\varepsilon(\alpha,\beta)=\alpha+\beta-\tilde{\phi_\varepsilon}(\alpha-\beta)$ where $\tilde{\phi_\varepsilon}$  is at least twice differentiable and convex for every $\varepsilon>0$.
	\end{itemize}
\end{assumption}

\textcolor{black}{Based on these assumptions, we characterize the Nash equilibrium of NEP($\varepsilon$) by its KKT conditions in the following lemma:
\begin{lemma}[KKT of NEP($\varepsilon$)]\label{lem:KKTNEPeps}
	Given \textcolor{black}{Assumption} \ref{Assump:SNEP} and some constraint qualification for $X_\nu$, the KKT conditions \textcolor{black}{of the leader-level problems} are necessary and sufficient for the global minimizer of each leader problem in \ref{Eq:SmoothNEP}.
	In particular, we have the KKT conditions of player $\nu$'s optimization problem for $\nu=1,\dots N$:
	\begin{subequations}\label{Eq:KKTepsilon}
		\begin{align}
		\begin{split}
		0=&Q_\nu x_\nu+c_\nu +\frac{1}{2} (L^\top +Q_y^{-1}B^\top )^\top _{\nu,:} a \\
		&+ \frac{1}{2}\sum\limits^{m}_{i=1}a_i(L^\top -Q_y^{-1}B^\top )^\top _{\nu,i} ~\tilde{\phi_\varepsilon}'\left([(L^\top -Q_y^{-1}B^\top ) x]_i\right) + \nabla_{x_\nu}g_\nu (x_\nu) \lambda_\nu,
		\end{split}\\
		0&=\min\left\{ \lambda_\nu , -g_{\nu}(x_\nu)\right\},
		\end{align}
	\end{subequations}
with the Lagrange multiplier $\lambda_\nu\in\R_+^{m_\nu}$ and the Jacobian of the constraints\\
 $	 \nabla_{x_\nu}g_\nu (x_\nu)= (\nabla_{x_\nu} g_{\nu_1}(x_\nu),\dots,\nabla_{x_\nu} g_{\nu_{m_\nu}}(x_\nu) )\in\R^{n_\nu\times m_\nu}$.
\end{lemma}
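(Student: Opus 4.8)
The plan is to observe that, for each fixed rival strategy $x_{-\nu}$, the $\nu$-th leader problem in \ref{Eq:SmoothNEP} is a smooth convex program, so that the stated constraint qualification makes its KKT conditions necessary at a minimizer while convexity makes them sufficient; the explicit system \eqref{Eq:KKTepsilon} then follows by differentiating $\theta^\varepsilon_\nu$ and re-collecting terms.

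First I would abbreviate $A=L^\top-Q_y^{-1}B^\top$ and $C=L^\top+Q_y^{-1}B^\top$ (both in $\R^{m\times n}$) and fix $x_{-\nu}$. Viewed as a function of $x_\nu$ alone, $\theta^\varepsilon_\nu$ splits into: the quadratic part $\tfrac12 x_\nu^\top Q_\nu x_\nu+c_\nu^\top x_\nu$, which is strictly convex since $Q_\nu$ is symmetric positive definite; the affine parts $\tfrac12 a_i(Cx)_i$; and the terms $\tfrac12 a_i\,\tilde\phi_\varepsilon((Ax)_i)$, each being the convex function $\tilde\phi_\varepsilon$ (Assumption \ref{Assump:SNEP}) composed with an affine map and scaled by $a_i\ge 0$, hence convex. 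Summing shows $\theta^\varepsilon_\nu(\cdot,x_{-\nu})$ is strictly convex, and it is twice continuously differentiable because $\tilde\phi_\varepsilon$ is. Combined with the fact that $X_\nu=\{x_\nu:g_\nu(x_\nu)\le 0\}$ is nonempty, closed and convex (as $g_\nu$ is convex), this makes leader $\nu$'s problem a smooth convex program with a unique minimizer.

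Next I would invoke the standard optimality theory for convex programs. Under the assumed constraint qualification for $X_\nu$ the first-order KKT conditions hold at any local — hence global — minimizer, giving necessity; conversely, for a convex program with continuously differentiable data any feasible point admitting a multiplier solving the KKT system is a global minimizer, giving sufficiency. Then I would simply read off $\nabla_{x_\nu}\theta^\varepsilon_\nu$: the quadratic and linear terms give $Q_\nu x_\nu+c_\nu$; the affine part $\tfrac12 a^\top Cx$ gives the $x_\nu$-block of $\tfrac12 C^\top a$, namely $\tfrac12 (L^\top+Q_y^{-1}B^\top)^\top_{\nu,:}\,a$; the chain rule applied to $\tilde\phi_\varepsilon((Ax)_i)$ gives the contribution $\tfrac12\sum_{i=1}^m a_i\,(L^\top-Q_y^{-1}B^\top)^\top_{\nu,i}\,\tilde\phi_\varepsilon'\!\left([(L^\top-Q_y^{-1}B^\top)x]_i\right)$; and adding the constraint term $\nabla_{x_\nu}g_\nu(x_\nu)\lambda_\nu$, together with encoding $\lambda_\nu\ge 0$, $g_\nu(x_\nu)\le 0$ and $\lambda_\nu^\top g_\nu(x_\nu)=0$ componentwise by $0=\min\{\lambda_\nu,-g_\nu(x_\nu)\}$, reproduces exactly \eqref{Eq:KKTepsilon}.

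There is no deep obstacle here; the statement is essentially a checklist. The parts demanding care are the bookkeeping of transposes and of the block indices $(\cdot)_{\nu,:}$ and $(\cdot)_{\nu,i}$ in the gradient computation, and the observation that it is precisely the differentiability of $\tilde\phi_\varepsilon$ built into Assumption \ref{Assump:SNEP} that renders $\theta^\varepsilon_\nu$ smooth enough for a KKT system to be meaningful — whereas the constraint qualification is invoked only for necessity, sufficiency following automatically from convexity.
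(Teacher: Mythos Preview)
Your proposal is correct and follows essentially the same approach as the paper: both argue that each leader problem is a smooth convex program (strictly convex objective, convex closed feasible set), so that KKT is necessary under the constraint qualification and sufficient by convexity. Your version is simply more explicit than the paper's very terse proof, spelling out the convexity of $\theta^\varepsilon_\nu$ term-by-term and carrying out the gradient computation that produces \eqref{Eq:KKTepsilon}, whereas the paper just asserts strict convexity and refers back to \eqref{Eq:FollowerSolution}.
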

\begin{proof}
	KKT is necessary and sufficient for every leader problem in presence of a constraint qualification, since the objectives are strictly convex and the  strategy set are convex and closed.
	Therefore, the joint KKT system is {necessary and sufficient}  for the unique Nash equilibrium.
	The follower's solution can be explicitly computed by the leader's solutions, c.f. (\ref{Eq:FollowerSolution}).
\end{proof}
The imposed constraint qualification is specified in the following section.}
\section{Relation between MLFG/NEP and NEP($\varepsilon$)}\label{Sec:4}
\textcolor{black}{In this section, we study the relationship between the original MLFG (or its equivalent NEP) and the smoothed problem NEP($\varepsilon$).
Our aim is to demonstrate that Nash equilibria of NEP($\varepsilon$) \textcolor{black}{converge} to a Nash equilibrium of the original problem as the smoothing parameter $\varepsilon\rightarrow0$.
}

\textcolor{black}{Instead of looking at optimality conditions for the nonsmooth formulation, we are considering  stationarity concepts for mathematical programs with complementarity constraints (MPCC) next.}
\begin{definition}[Mathematical Programs with Complementarity Constraints]
	Let $f:~\R^n\rightarrow\R$, $g:~\R^n\rightarrow\R^{m}$, and $G_1,G_2:~\R^n\rightarrow\R^{l}$ be smooth functions.
	Then we call:
	\begin{align}\begin{split}\label{MPCC}
	\min\limits_z~& f(z)\\
	\mathrm{s.t.}~ &g(z)\leq0\\
	& 0=\min\left\{ G_1(z), G_2(z)  \right\} 
	\end{split}\tag{MPCC}
	\end{align}
	a mathematical program with complementarity constraints.	
\end{definition}
In fact, each leader problem \eqref{leader} can be formulated as an MPCC:
\begin{align}\begin{split}\label{Eq:MPCCnu}
\min\limits_{x_\nu,y}~&\theta_\nu(x_\nu,x_{-\nu})=\frac{1}{2}x_\nu^\top Q_\nu x_\nu+c_\nu^\top x_\nu+ a^\top \textcolor{black}{y(x)}\\
\mathrm{s.t.}~&g_\nu(x_\nu)\leq 0\\
& 0=\min\{G_1(x_\nu,x_{-\nu},\textcolor{black}{y(x)}),G_2(x_\nu,x_{-\nu},\textcolor{black}{y(x)})\},
	\end{split}\tag{MPCC$_\nu$}
\end{align}
with the complementarity constraints $G_1(x_\nu,x_{-\nu},\textcolor{black}{y(x)})=\textcolor{black}{y(x)}-(Q_y^{-1}B^\top)^\top x$ and $G_2(x_\nu,x_{-\nu},\textcolor{black}{y(x)})=\textcolor{black}{y(x)}-L^\top x$.
For $\nu=1,\dots, N$, the \eqref{Eq:MPCCnu} form together the generalized Nash equilibrium problem (GNEP) formulation of the MLFG, where the complementarity constraints are shablack constraints.

Similar to KKT points, there is a variety of stationary concepts for MPCC, we introduce the strongest one here: strongly stationary points.
We adapt \cite[Theorem 2]{ScheelScholtes00}.
\begin{definition}[S-Stationarity]\label{Def:SStation}
   We call $\bar{z}$ a strongly \textcolor{black}{(S-)stationary} point of \eqref{MPCC} if there exist multipliers $(\lambda, \Gamma_1, \Gamma_2)\in\R^{m+l+l}$ with:
	\begin{align}	\begin{split}\label{Eq:Sstationarity}
	0=\nabla_z f(\bar{z}) + \sum\limits_{i=1}^{m} \lambda_i \nabla_z g_i(\bar{z})& - \sum\limits_{i=1}^{l} \Gamma_{1,i} \nabla_z G_{1,i}(\bar{z}) - \sum\limits_{i=1}^{l} \Gamma_{2,i} \nabla_z G_{2,i}(\bar{z}),\\
	g(\bar{z})&\leq 0,\\
	\lambda & \geq 0,\\
	g_i(\bar{z})\lambda_i &= 0, \quad i=1,\dots, m,\\
	\min\left\{ G_{1,i}(\bar{z}), G_{2,i}(\bar{z})  \right\} &=0,  \quad i=1,\dots,l,\\
	G_{1,i}(\bar{z})\Gamma_{1,i}&=0,  \quad i=1,\dots,l,\\
	G_{2,i}(\bar{z})\Gamma_{2,i}&=0,\quad i=1,\dots,l,\\
	\Gamma_{1,i},\Gamma_{2,i}&\geq 0,\quad i: G_{1,i}(\bar{z})=G_{2,i}(\bar{z})=0.
	\end{split}	\end{align}
\end{definition}

\textcolor{black}{Besides suitable stationarity concepts, MPCCs also require proper constraint qualifications. 
Here, the Constant Rank Constraint Qualification (CRCQ) for MPCC is defined  similarly to its version for MPEC in \cite[Def. 2.2]{Sonja10}.
Originally, the CRCQ was introduced for NLP by \cite{Janin1984}.
\begin{definition}(MPCC-CRCQ)
 The constraint qualification MPCC-CRCQ holds in the feasible point $\bar{z}$ of  \eqref{MPCC}, if  for every $\mathcal{K}_g$, $\mathcal{K}_1$, and $\mathcal{K}_2$ with
	\begin{align*}
		\mathcal{K}_g &\subseteq I_g(\bar{z})=\left\{i \in \{1,\dots,m\}| g_i(\bar{z})=0 \right\},\\
		\mathcal{K}_1 &\subseteq I_1(\bar{z})=\left\{i \in \{1,\dots,l\}| G_{1,i}(\bar{z})=0 \right\},\\
		\mathcal{K}_2 &\subseteq I_2(\bar{z})=\left\{i \in \{1,\dots,l\}| G_{2,i}(\bar{z})=0 \right\},\\
	\end{align*}
	there exists a neighborhood $\mathcal{U}(\bar{z})$ such that for every ${z}\in \mathcal{U}(\hat{z})$ the family of gradient vectors
	\begin{equation*}
        \{ \nabla g_i({z})|i\in\mathcal{K}_g  \}\cup  \{ \nabla G_{1,i}({z})|i\in\mathcal{K}_1  \} \cup \{ \nabla G_{2,i}({z})|i\in\mathcal{K}_2  \},
	\end{equation*}
	has the same rank as the family
	\begin{equation*}
	\{ \nabla g_i(\bar{z})|i\in\mathcal{K}_g  \}\cup  \{ \nabla G_{1,i}(\bar{z})|i\in\mathcal{K}_1  \} \cup \{ \nabla G_{2,i}(\bar{z})|i\in\mathcal{K}_2  \}.
	\end{equation*}
\end{definition}
}

In the following theorem we show that a limit of the Nash equilibria of the smoothed problems is indeed strongly stationary with respect to the leader problems.

\begin{theorem}[S-stationarity and Convergence of Multipliers]
\textcolor{black}{	Let $(\varepsilon_k)_k$ be a positive sequence with $\varepsilon_k\rightarrow 0$ and let $(x^*(\varepsilon_k))_k$ be the associated sequence of the unique  Nash equilibria to \emph{NEP($\varepsilon_k$)}.
	Assume this sequence  converges with $x^*(\varepsilon_k)\rightarrow x^*(0)$ and every subvector of the limit $x_\nu^*(0)$ satisfies MPCC-CRCQ for its leader's  \emph{MPCC}$_\nu$.
}

	\textcolor{black}{Then, \textcolor{black}{there exists} a bounded sequence of multipliers associated to  $(x^*(\varepsilon_k))_k$ and the limit $x_\nu^*(0)$ is a strongly stationary point for \eqref{Eq:MPCCnu} for $\nu=1,\dots,N$.}
\end{theorem}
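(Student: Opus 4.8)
The plan is to pass to the limit in the KKT system of NEP$(\varepsilon_k)$ from Lemma~\ref{lem:KKTNEPeps}, reinterpreting those conditions as S-stationarity conditions for the \eqref{Eq:MPCCnu} in the limit. First I would fix a player $\nu$ and write out the KKT system \eqref{Eq:KKTepsilon} at $x^*(\varepsilon_k)$ with its multiplier $\lambda_\nu(\varepsilon_k)\in\R_+^{m_\nu}$. The central idea is that the term $\tilde{\phi}_\varepsilon'\bigl([(L^\top-Q_y^{-1}B^\top)x]_i\bigr)$, which encodes the smoothed complementarity, should be identified with a convex combination of the MPCC-multipliers $\Gamma_{1,i}$ and $\Gamma_{2,i}$. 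Concretely, recalling $y_\varepsilon(x)=\tfrac12[(L^\top+Q_y^{-1}B^\top)x+\tilde{\phi}_\varepsilon((L^\top-Q_y^{-1}B^\top)x)]$, one has $G_{1,i}=y_i-(Q_y^{-1}B^\top x)_i=\tfrac12[\tilde{\phi}_\varepsilon((Ax)_i)-(Ax)_i]$ and $G_{2,i}=\tfrac12[\tilde{\phi}_\varepsilon((Ax)_i)+(Ax)_i]$ with $A=L^\top-Q_y^{-1}B^\top$, so that the derivative of $a_i y_\varepsilon(x)_i$ with respect to $x$ naturally splits into a piece proportional to $\nabla G_{1,i}$ (weight $\propto a_i(1-\tilde{\phi}_\varepsilon'((Ax)_i))$) and a piece proportional to $\nabla G_{2,i}$ (weight $\propto a_i(1+\tilde{\phi}_\varepsilon'((Ax)_i))$). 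Since $\tilde{\phi}_\varepsilon$ is convex with $\tilde{\phi}_\varepsilon\to|\cdot|$ as $\varepsilon\to0$, one expects $\tilde{\phi}_\varepsilon'\in[-1,1]$, so both weights are nonnegative; this is exactly the sign condition in Definition~\ref{Def:SStation}.

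Next I would establish boundedness of the combined multiplier sequence $(\lambda_\nu(\varepsilon_k),\Gamma_{1}(\varepsilon_k),\Gamma_{2}(\varepsilon_k))_k$. The $\Gamma$ parts are bounded automatically because $|\tilde{\phi}_\varepsilon'|\le1$ and $a\in\R_+^m$ is fixed; the $\lambda_\nu$ part is where MPCC-CRCQ enters. The argument is standard: suppose $\|\lambda_\nu(\varepsilon_k)\|\to\infty$ along a subsequence; normalize, extract a convergent direction, and use that the active-constraint gradients $\nabla g_{\nu}(x^*(\varepsilon_k))$ restricted to the asymptotically-active index set have, by the constant-rank property in a neighborhood of $x_\nu^*(0)$, a locally constant rank, which forces a nontrivial null combination of gradients at the limit and hence at nearby points — contradicting the stationarity identity once the bounded terms are divided out. (This is the classical Janin-type argument adapted to MPCC as in \cite{Sonja10}.) With boundedness in hand, extract a further subsequence so that $\lambda_\nu(\varepsilon_k)\to\lambda_\nu$, $\Gamma_{j}(\varepsilon_k)\to\Gamma_{j}$, and $\tilde{\phi}_\varepsilon'((Ax^*(\varepsilon_k))_i)\to\gamma_i\in[-1,1]$ for each $i$.

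Finally I would check that the limit $(x_\nu^*(0),\lambda_\nu,\Gamma_1,\Gamma_2)$ satisfies every line of \eqref{Eq:Sstationarity}. The stationarity equation follows by continuity of all gradients (the $g_\nu$ are twice differentiable, the $G_{j,i}$ are affine) and the splitting described above. The primal feasibility $g_\nu(x_\nu^*(0))\le0$ and $0=\min\{G_{1,i},G_{2,i}\}$ follow from closedness of $X_\nu$ and from $y_\varepsilon(x^*(\varepsilon_k))\to y^*(x^*(0))=\max\{Q_y^{-1}B^\top x,L^\top x\}$ (Lemma~\ref{Th:bestresponse}), i.e.\ the smoothed best response converges to the exact one, which drives the two complementarity branches together. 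The complementarity products $g_{\nu,i}\lambda_{\nu,i}=0$ pass to the limit from $0=\min\{\lambda_\nu(\varepsilon_k),-g_\nu(x^*(\varepsilon_k))\}$. For the crucial biactive sign condition, note that on indices $i$ with $G_{1,i}(x^*(0))=G_{2,i}(x^*(0))=0$ — equivalently $(Ax^*(0))_i=0$ — we get $\Gamma_{1,i},\Gamma_{2,i}\ge0$ directly from $\gamma_i\in[-1,1]$; on indices where only one branch is active the corresponding $G_{j,i}\Gamma_{j,i}=0$ is immediate and the other $\Gamma$ sign is unconstrained, consistent with S-stationarity. I expect the main obstacle to be precisely the boundedness of $(\lambda_\nu(\varepsilon_k))_k$ under MPCC-CRCQ: unlike MPCC-LICQ this does not give uniqueness of multipliers, so one must run the normalization/constant-rank contradiction carefully and, in the biactive case, make sure the contribution of the (bounded) $\Gamma$ terms to the limiting gradient combination is correctly accounted for so that the residual combination of $g_\nu$-gradients is genuinely nontrivial.
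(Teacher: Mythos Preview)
Your proposal is correct and follows essentially the same route as the paper: identify the MPCC multipliers as $\Gamma_{1,i},\Gamma_{2,i}=\tfrac{a_i}{2}(1\pm\tilde{\phi}'_{\varepsilon_k})$ (automatically bounded and nonnegative since $|\tilde{\phi}'_\varepsilon|\le 1$), prove boundedness of $(\lambda_\nu(\varepsilon_k))_k$ via the normalization/CRCQ contradiction, and then pass to the limit in every line of Definition~\ref{Def:SStation}. The one step the paper spells out that you only flag as ``the main obstacle'' is the preliminary replacement of $\lambda_\nu(\varepsilon_k)$ by a multiplier $\hat{\lambda}_\nu^k$ whose support indexes a linearly independent family of gradients $\nabla_{x_\nu}g_{\nu,j}(x_\nu^k)$; this reduction is exactly what makes the constant-rank contradiction land, so be sure to include it when you write the proof out in full.
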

\begin{proof}
	\textcolor{black}{The proof consists of three major parts: First, we introduce notation and modest conclusions.
	Second, we  demonstrate that there exists a bounded sequence of multipliers associated to $(x^*_{\nu}(\varepsilon_k))_k$.
	Third, we verify the conditions of strong stationarity by constructing suitable multipliers.}\\
	
 \textcolor{black}{(i)
 In order to keep the notation simple, let $x_\nu^k=x_\nu^*(\varepsilon_k)$ and the limit $\bar{x}_\nu=x_\nu^*(0)$ and respectively the concatenations $x^k$ and $\bar{x}$, both without the player index.
	Further we introduce the short forms for $k\in\mathbb{N}$ and $i=1,\dots, m$:
	\begin{equation*}
		z_i^k=\left( (L^\top-Q_y^{-1}B^\top)x^k\right)_i,  \bar{z}_i=\left( (L^\top-Q_y^{-1}B^\top)\bar{x}\right)_i, \text{ and } \xi^k_i=\tilde{\phi}'_{\varepsilon_k}(z^k_i),
	\end{equation*}
	 and the concatenations are denoted by $z^k$, $\bar{z}$, and $\xi^k$, respectively.
	 Recall, it holds that $z_i^k\rightarrow\bar{z}_i$ and  $|\xi_i^k|\leq1$ for  $i=1,\dots, m$ and all $k\in\mathbb{N}$.}
	
	\textcolor{black}{Due to stationarity of  $x^k_{\nu}$ for all $\nu$ and $k\in\mathbb{N}$, we  use  Lemma \ref{lem:KKTNEPeps}, i.e. \eqref{Eq:KKTepsilon}, and conclude that there exists multipliers $\lambda_\nu^k\in \R^{m_\nu}$ such that:
\begin{subequations}
\begin{align}
	0=&Q_\nu x^k_\nu+c_\nu +\frac{1}{2} (L^\top +Q_y^{-1}B^\top )^\top _{\nu,:} a + \frac{1}{2}\sum\limits^{m}_{i=1}a_i(L^\top -Q_y^{-1}B^\top )^\top _{\nu,i} ~\xi_i^k + \nabla_{x_\nu}g_\nu (x^k_\nu) \lambda_\nu^k,\label{KKT10a}\\
	0=&\min\left\{ \lambda^k_\nu , -g_{\nu}(x^k_\nu)\right\},\label{eq:KKTproof0}
	\end{align}
	\end{subequations}
	Hence, by rearranging the sum in \eqref{KKT10a} we have:
	\begin{equation}\label{eq:KKTproof}
			0=Q_\nu x^k_\nu+c_\nu + \sum\limits_{i=1}^{m}L^\top_{\nu,i}\frac{a_i}{2}(1+\xi_i^k)+   \sum\limits_{i=1}^{m}Q_y^{-1}B^\top_{\nu,i}\frac{a_i}{2}(1-\xi_i^k)+ \sum\limits_{j=1}^{m_\nu}\nabla_{x_\nu}g_{\nu,j} (x^k_\nu) \lambda_{\nu,j}^k,
	\end{equation}
Next, we construct a multiplier vector $\hat{\lambda}^k_\nu$ for all $\nu$ and sufficiently large $k$ such that the following inclusion holds:
$$ \mathrm{supp}\left(\hat{\lambda}^k_\nu\right)\subseteq  \mathrm{supp}\left({\lambda}^k_\nu\right)\subseteq I_{g_\nu}(\bar{x}_\nu)=\left\{j \left|g_{\nu,j}(\bar{x}_\nu)=0 \right.\right\},$$	
where $\mathrm{supp}(\cdot)$ denotes the support of a vector, i.e. the set of indices of its nonzero entries, the multiplier $\hat{\lambda}^k_\nu$  shall satisfy \eqref{eq:KKTproof} and the family of vectors
\begin{equation}\label{Eq:linindep}
	\left\{  \nabla_{x_\nu}g_{\nu,j} (x^k_\nu) \left| j\in \mathrm{supp}\left(\hat{\lambda}^k_\nu\right)    \right. \right\}
\end{equation}
is linearly independent.}\\

\textcolor{black}{(ii) 
Now, we demonstrate  by contradiction that the multiplier sequence $\left(\hat{\lambda}^k_\nu\right)_k$ is bounded for all $\nu$.
Therefore, it is assumed that the converse is true, i.e. there exists $\hat{\nu}$ such that $\left(\hat{\lambda}^k_{\hat{\nu}}\right)_k$ is unbounded.
By assumption, we have $\|\hat{\lambda}^k_{\hat{\nu}}\|\rightarrow \infty$ as $k\rightarrow\infty$ and thus  it holds, that $\|\hat{\lambda}^k_{\hat{\nu}}\|>0$ for sufficiently large $k$.
Therefore, we can define the auxiliary sequence
\begin{equation}\label{Eq:auxsequence}
	 \tilde{\lambda}^k_{\hat{\nu}}=\frac{\hat{\lambda}^k_{\hat{\nu}}}{\|\hat{\lambda}^k_{\hat{\nu}}\|}.
\end{equation}
Clearly, all elements of this sequence are normalized, i.e. $\|\tilde{\lambda}^k_{\hat{\nu}}\|=1$.
This in turn implies that the sequence is bounded and admits a convergent subsequence $\left(\tilde{\lambda}^l_{\hat{\nu}}\right)_{l\in K}\rightarrow\tilde{\lambda}_{\hat{\nu}}$ with $K \subseteq\mathbb{N}$.
It follows, that also the limit of the subsequence satisfies $\|\tilde{\lambda}_{\hat{\nu}}\|=1$ and there exists an index $j_0\in\{1,\dots,m_\nu\}$ with $\tilde{\lambda}_{\hat{\nu},j_0}>0$ such that $\mathrm{supp}\left(\tilde{\lambda}_{\hat{\nu}}\right)\neq\emptyset$.
Now, divide \eqref{eq:KKTproof} by norm of the unbounded sequence $\|\hat{\lambda}^l_{\hat{\nu}}\|$ and take the limit $l\rightarrow\infty$.
Note, that the first four terms of \eqref{eq:KKTproof} are bounded and thus they vanish in the limit.
It remains to compute the limit of the last term:
$$0=\lim\limits_{l\rightarrow \infty, l\in K}\frac{1}{\|\hat{\lambda}^l_{\hat{\nu}}\|}\sum\limits_{j\in\mathrm{supp}\left( \hat{\lambda}_{\hat{\nu}}^k \right)} \nabla_{x_{\hat{\nu}}}g_{{\hat{\nu}},j} (\bar{x}_{\hat{\nu}}) \hat{\lambda}_{\hat{\nu},j}^k,$$
with \eqref{Eq:auxsequence} and (for sufficiently large $k$) $\mathrm{supp}\left(\tilde{\lambda}_{\hat{\nu}}   \right)\subseteq\mathrm{supp}\left(\tilde{\lambda}_{\hat{\nu}}^k   \right)=\mathrm{supp}\left(\hat{\lambda}_{\hat{\nu}}^k   \right)$, we get 
$$0=\sum\limits_{j\in\mathrm{supp}\left( \tilde{\lambda}_{\hat{\nu}} \right)} \nabla_{x_{\hat{\nu}}}g_{{\hat{\nu}},j} (\bar{x}_{\hat{\nu}}) \tilde{\lambda}_{{\hat{\nu}},j}.$$
This implies with $\mathrm{supp}\left(\tilde{\lambda}_{\hat{\nu}}\right)\neq\emptyset$, that the set of vectors
\begin{equation*}
\left\{ \nabla_{x_{\hat{\nu}}}g_{{\hat{\nu}},j} (\bar{x}_{\hat{\nu}}) \left| j\in \mathrm{supp}\left(\tilde{\lambda}_{\hat{\nu}}\right)    \right. \right\},	
\end{equation*}
is  linearly dependent.
Thus, by MPCC-CRCQ and $\mathrm{supp}\left(\tilde{\lambda}_{\hat{\nu}}   \right)\subseteq\mathrm{supp}\left(\hat{\lambda}_{\hat{\nu}}^k   \right)$, also 
\begin{equation*}
\left\{ \nabla_{x_{\hat{\nu}}}g_{{\hat{\nu}},j} ({x}^k_{\hat{\nu}}) \left| j\in \mathrm{supp}\left(\hat{\lambda}^k_{\hat{\nu}}\right)    \right. \right\},	
\end{equation*}
is  linearly dependent, which is a contradiction to the linear independence of \eqref{Eq:linindep} for sufficiently large $k$.
Therefore the assumption $\left(\hat{\lambda}^k_{\hat{\nu}}\right)_k$ being unbounded, because \eqref{eq:KKTproof} could not be satisfied by such a multiplier sequence.
Therefore, the sequence of multipliers  associated to $(x^k)_k$ is bounded, i.e. it exist a multiplier vector $\bar{\lambda}$ associated to $\bar{x}$.}\\ 

\textcolor{black}{(iii) In the following, we verify the conditions of \textcolor{black}{S-stationarity} of Definition \ref{Def:SStation}, i.e. (\ref{Eq:Sstationarity}) exemplary for on leader $\nu$, and begin with applying them to \eqref{Eq:MPCCnu}:
\begin{subequations}\label{Eq:SstationarityProof}
	\begin{align}
	0=\begin{pmatrix}Q_\nu x_\nu+c_\nu\\ a\end{pmatrix}	+ \sum\limits_{i=1}^{m} \lambda_i\begin{pmatrix}\nabla_{x_\nu} g_{\nu,i}(x_\nu)\\0\end{pmatrix}  &- \sum\limits_{i=1}^{l} \Gamma_{1,i}\begin{pmatrix}-(Q_y^{-1}B^\top)_{\nu,i}\\e_i\end{pmatrix} - \sum\limits_{i=1}^{l} \Gamma_{2,i}\begin{pmatrix}
	-L_{\nu,i}\\e_i
	\end{pmatrix},\label{Eq:SstationarityProofa}\\
	g_\nu(x_\nu)&\leq 0\label{Eq:SstationarityProofb},\\
	\lambda_\nu & \geq 0\label{Eq:SstationarityProofc},\\
	g_{\nu,i}(x_\nu)\lambda_{\nu,i} &= 0,\quad i=1,\dots, m_\nu\label{Eq:SstationarityProofd},\\
	\min\left\{	(y-(Q_y^{-1}B^\top)^\top x)_i,(y-L^\top x)_i\right\}&=0,  \quad i=1,\dots,m, \label{Eq:SstationarityProofe}\\
	(y-(Q_y^{-1}B^\top)^\top x)_i\Gamma_{1,i}&=0,\quad  i=1,\dots,m\label{Eq:SstationarityProoff},\\
	(y-L^\top x)_i\Gamma_{2,i}&=0,\quad i=1,\dots,m\label{Eq:SstationarityProofg},\\
	\Gamma_{1,i},\Gamma_{2,i}&\geq 0,\quad i: 	(y-(Q_y^{-1}B^\top)^\top x)_i=(y-L^\top x)_i\label{Eq:SstationarityProofh}.
	\end{align}
\end{subequations}
The remainder of the proof demonstrates that \eqref{Eq:SstationarityProof} is satisfied for the limit strategy $\bar{x}_\nu$ and its multiplier $\bar{\lambda}_\nu$, and we construct the additional multipliers $\Gamma_{1,i},\Gamma_{2,i}$.
We begin with the limit of \eqref{eq:KKTproof} for $k\rightarrow\infty$ in the sense of a suitable subsequence and get the expression:
\begin{equation}\label{Eq:KKTproof2}
			0=Q_\nu \bar{x}_\nu+c_\nu + \sum\limits_{i=1}^{m}L^\top_{\nu,i}\frac{a_i}{2}(1+\bar{\xi}_i)+   \sum\limits_{i=1}^{m}Q_y^{-1}B^\top_{\nu,i}\frac{a_i}{2}(1-\bar{\xi}_i)+ \sum\limits_{j=1}^{m_\nu}\nabla_{x_\nu}g_{\nu,j} (\bar{x}_\nu) \bar{\lambda}_{\nu,j},
\end{equation}
where $\bar{\xi}_i=\lim\limits_{k\rightarrow\infty}\xi_i^k=\lim\limits_{k\rightarrow\infty}\tilde{\phi}'_{\varepsilon_k}(z_k^i)$, note that $\bar{\xi}^i\in\{-1,0,1\}$.
If we choose  the multipliers to the complementarity constraints to be:
\begin{equation*}
\bar{\Gamma}_{1,i}=\frac{a_i}{2}\left(1+\bar{\xi}_i\right),	\bar{\Gamma}_{2,i}=\frac{a_i}{2}\left(1-\bar{\xi}_i\right),
\end{equation*}
and since $a_i-\Gamma_{1,i}-\Gamma_{2,i}=a_i-a_i/2(1+\bar{\xi}_{i})-a_i/2(1-\bar{\xi}_{i})=0$ for all $i=1,\dots,m_\nu$ and \eqref{Eq:KKTproof2}, then the condition in \eqref{Eq:SstationarityProofa} follows.}

\textcolor{black}{Feasibility (\ref{Eq:SstationarityProofb}-\ref{Eq:SstationarityProofd}) is due to continuity of $g_\nu$ and the convergent subsequence of the multipliers, which gives us $\bar{\lambda}$.
The feasibility of the complementarity constraint \eqref{Eq:SstationarityProofe} is due to choice of $\tilde{\phi}_\varepsilon$, which belongs to a smooth NCP function, c.f. (\ref{Eq:KKT}-\ref{Eq:FollowerSolution}).}

\textcolor{black}{\noindent It remains to demonstrate (\ref{Eq:SstationarityProoff}-\ref{Eq:SstationarityProofh}):}

\textcolor{black}{In case $G_{1,i}(\bar{x}_\nu,\bar{x}_{-\nu})> 0$, then by feasibility of $\bar{x}$ we have $G_{2,i}(\bar{x}_\nu,\bar{x}_{-\nu})= 0$ and thus  $\bar{\xi}_i=-1$  such that $\Gamma_{1,i}=0$.
If otherwise $G_{2,i}(\bar{x}_\nu,\bar{x}_{-\nu}) > 0$, then by the  feasibility of $\bar{x}$ it holds that $G_{1,i}(\bar{x}_\nu,\bar{x}_{-\nu})= 0$ and thus  $\bar{\xi}_i=-1$, i.e. $\Gamma_{2,i}=0$.
Since both arguments hold for all $i=1,\dots, m$, this yields (\ref{Eq:SstationarityProoff}-\ref{Eq:SstationarityProofg}).}

\textcolor{black}{Moreover, both multipliers satisfy $\bar{\Gamma}_{1,i}, \bar{\Gamma}_{2,i}\geq 0$ for all $i$ because $a_i>0$ and $|\bar{\xi}_i|\leq 1$; therefore,  \eqref{Eq:SstationarityProofh} holds.}

\textcolor{black}{Hence, the strategy $\bar{x}_\nu$ is strongly stationary for MPCC$_\nu$ and since we derived \eqref{Eq:SstationarityProof} for an arbitrary leader $\nu$, the proof is complete.}

\end{proof}
\textcolor{black}{In the preceding theorem, it is demonstrated that the sequence of multipliers has accumulation points in presence of a suitable constraint qualification if the primal variables converge.
We have seen that the limit of the Nash equilibria of NEP($\varepsilon$) are in fact strongly stationary to the original MLFG.
However, the following theorem goes even further by demonstrating that a limit of these Nash equilibria is in fact a Nash Equilibrium of the MLFG.}
\begin{theorem}[A Nash Equilibrium of the MLFG]\label{Th:constr}
\textcolor{black}{	Let $(\varepsilon_k)_k$ be a positive sequence with $\varepsilon_k\rightarrow 0$ and let $(x^*(\varepsilon_k))_k$ be the associated sequence of the unique  Nash equilibria to \emph{NEP($\varepsilon_k$)}.}
	
\textcolor{black}{	Then any accumulation point of $(x^*(\varepsilon_k))_k$ for a positive sequence $\varepsilon_k\rightarrow 0$ is a Nash equilibrium to \emph{NEP}
	and therefore of the MLFG.}
\end{theorem}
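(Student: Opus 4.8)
The plan is to exploit convexity of NEP together with the already-established convergence of equilibria and multipliers, rather than working directly with the nonsmooth objective. Fix an accumulation point $\bar{x}=x^*(0)$ of $(x^*(\varepsilon_k))_k$; passing to a subsequence we may assume $x^*(\varepsilon_k)\to\bar{x}$. First I would observe that $\bar{x}\in X$ since each $X_\nu$ is closed and $x^*(\varepsilon_k)\in X$. The goal is to show that for every leader $\nu$ and every $\hat{x}_\nu\in X_\nu$,
\begin{equation*}
\theta_\nu(\bar{x}_\nu,\bar{x}_{-\nu})\le\theta_\nu(\hat{x}_\nu,\bar{x}_{-\nu}),
\end{equation*}
where $\theta_\nu$ is the nonsmooth leader objective from \eqref{NEP22}. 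Since $\theta_\nu(\cdot,\bar{x}_{-\nu})$ is convex on the convex set $X_\nu$, it suffices to produce a subgradient at $\bar{x}_\nu$ that serves as a variational-inequality/optimality certificate — and this is exactly what the multipliers delivered by the previous theorem provide.

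The key steps, in order: (1) From Lemma \ref{lem:KKTNEPeps}, write the stationarity identity \eqref{Eq:KKTepsilon} for $x^*(\varepsilon_k)$ with multiplier $\lambda_\nu^k$; the previous theorem guarantees (along a further subsequence, using MPCC-CRCQ) that $\lambda_\nu^k\to\bar{\lambda}_\nu\ge0$ and that the derivative term $\tilde{\phi}_\varepsilon'((Ax^k)_i)\to\bar{\xi}_i\in[-1,1]$. (2) Take the limit to obtain \eqref{Eq:KKTproof2}, i.e.
\begin{equation*}
0=Q_\nu\bar{x}_\nu+c_\nu+\sum_{i=1}^m L^\top_{\nu,i}\tfrac{a_i}{2}(1+\bar{\xi}_i)+\sum_{i=1}^m Q_y^{-1}B^\top_{\nu,i}\tfrac{a_i}{2}(1-\bar{\xi}_i)+\nabla_{x_\nu}g_\nu(\bar{x}_\nu)\bar{\lambda}_\nu,
\end{equation*}
together with $\min\{\bar{\lambda}_\nu,-g_\nu(\bar{x}_\nu)\}=0$ by continuity. (3) Identify the vector $s_\nu:=\sum_i L^\top_{\nu,i}\tfrac{a_i}{2}(1+\bar{\xi}_i)+\sum_i Q_y^{-1}B^\top_{\nu,i}\tfrac{a_i}{2}(1-\bar{\xi}_i)$ as an element of $\sum_i a_i\,\partial_{x_\nu}\max\{(Q_y^{-1}B^\top x)_i,(L^\top x)_i\}$ evaluated at $\bar{x}$: indeed, using feasibility of the limit (as in step (iii) of the previous proof, $\bar{\xi}_i=-1$ unless both arguments of the $\max$ coincide, in which case $\tfrac12(1+\bar{\xi}_i),\tfrac12(1-\bar{\xi}_i)$ are nonnegative convex weights summing to one), the weights $\tfrac{a_i}{2}(1\pm\bar{\xi}_i)$ are precisely a valid convex-combination selection from the subdifferential of the $i$-th $\max$ term. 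Hence $Q_\nu\bar{x}_\nu+c_\nu+s_\nu\in\partial_{x_\nu}\theta_\nu(\bar{x}_\nu,\bar{x}_{-\nu})$. (4) Conclude: the displayed identity says $-\big(Q_\nu\bar{x}_\nu+c_\nu+s_\nu\big)=\nabla_{x_\nu}g_\nu(\bar{x}_\nu)\bar{\lambda}_\nu$ with $\bar{\lambda}_\nu\ge0$ and complementarity, which are exactly the KKT conditions for minimizing the convex function $\theta_\nu(\cdot,\bar{x}_{-\nu})$ over $X_\nu=\{g_\nu\le0\}$; since $X_\nu$ is convex and $\theta_\nu$ convex, these conditions are sufficient for global optimality, so $\bar{x}_\nu$ is a best response to $\bar{x}_{-\nu}$. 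As $\nu$ was arbitrary, $\bar{x}$ is a Nash equilibrium of NEP, and by Lemma \ref{Th:bestresponse} (reconstructing $y^*(\bar{x})$) it yields a Nash equilibrium of the MLFG \eqref{follower}--\eqref{leader}.

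The main obstacle I anticipate is step (3): carefully justifying that the limiting weights $\tfrac{a_i}{2}(1\pm\bar{\xi}_i)$ lie in the Clarke/convex subdifferential of the nonsmooth $\max$ terms at $\bar{x}$, handling the three cases $\bar{\xi}_i\in\{-1,0,1\}$ — in the non-active cases one argument strictly dominates and the subdifferential is a singleton (forcing $\bar{\xi}_i=\pm1$), while in the kink case $\bar{z}_i=0$ one needs that $\bar{\xi}_i\in[-1,1]$ so the two weights form a legitimate convex combination. A secondary technical point is that a constraint qualification for $X_\nu$ is needed to pass from the subgradient identity back to global optimality of the convex problem; this is covered by the same constraint qualification invoked in Lemma \ref{lem:KKTNEPeps}, and convexity makes the KKT conditions sufficient regardless. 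Everything else — closedness of $X$, continuity of $g_\nu$, boundedness and convergence of $(\lambda_\nu^k)$ — is either immediate or imported verbatim from the preceding theorem.
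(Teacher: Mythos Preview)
Your approach is genuinely different from the paper's, and as written it proves a weaker statement than Theorem~\ref{Th:constr}. The paper's proof is a pure continuity argument by contradiction: assuming $\bar x$ is not a Nash equilibrium of \eqref{NEP22}, there is a leader $\nu$ and a deviation $\hat x_\nu$ with a strictly positive gap $\varrho=\theta_\nu(\bar x_\nu,\bar x_{-\nu})-\theta_\nu(\hat x_\nu,\bar x_{-\nu})$; joint continuity of $\theta_\nu^\varepsilon$ in $(x_\nu,x_{-\nu},\varepsilon)$ then transfers a fraction of this gap back to $\theta_\nu^{\varepsilon}$ for small $\varepsilon$, contradicting that $x^*(\varepsilon)$ is the Nash equilibrium of NEP$(\varepsilon)$. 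No multipliers, no constraint qualification, and no appeal to the preceding S-stationarity theorem are used.

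Your route through limiting KKT conditions and subdifferential identification is correct in spirit, and step~(3) is fine (the three cases for $\bar\xi_i$ do give valid convex-subdifferential selections of the $\max$ terms). The gap is in step~(1)--(2): you invoke convergence of the multipliers $\lambda_\nu^k\to\bar\lambda_\nu$, which the previous theorem delivers only under MPCC-CRCQ at $\bar x$. Theorem~\ref{Th:constr} as stated does \emph{not} assume this CQ, so your argument establishes the result only under an additional hypothesis. (Also, your closing remark is backwards: for a convex problem, KKT with valid multipliers is always \emph{sufficient} for optimality; CQs are needed only for necessity. The place you actually need the CQ is earlier, to get a bounded multiplier subsequence.)

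If you want to salvage your line of attack without the extra assumption, bypass the multipliers entirely: use that $x^*(\varepsilon_k)$ solves $\mathrm{VI}(X,\theta'^{\varepsilon_k})$, i.e.\ $(\hat x-x^*(\varepsilon_k))^\top \theta'^{\varepsilon_k}(x^*(\varepsilon_k))\ge 0$ for all $\hat x\in X$. The $\nu$-block of $\theta'^{\varepsilon_k}$ is exactly $Q_\nu x_\nu^k+c_\nu+s_\nu^k$ with $s_\nu^k$ bounded (since $|\xi_i^k|\le 1$); pass to a subsequence, identify the limit $s_\nu$ as a subgradient of the nonsmooth part at $\bar x$ by your step~(3), and conclude $0\in\partial_{x_\nu}\theta_\nu(\bar x_\nu,\bar x_{-\nu})+N_{X_\nu}(\bar x_\nu)$, which is sufficient by convexity. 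This recovers the full strength of the theorem and is arguably more informative than the paper's $\varrho/6$ argument, at the cost of a little subdifferential bookkeeping.
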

\begin{proof}
\textcolor{black}{Recall, since $x^*(\varepsilon)$ is the unique Nash equilibrium of \ref{Eq:SmoothNEP} for any $\varepsilon>0$, it holds by the definition of a Nash equilibrium that for all $\nu=1,\dots,N$:
\begin{equation*} \label{Eq:DefNE}
	\theta_\nu^\varepsilon(x_\nu^*(\varepsilon),x_{-\nu}^*(\varepsilon))\leq 	\theta_\nu^\varepsilon(x_\nu,x_{-\nu}^*(\varepsilon)) \text{ for all } 	x_\nu\in X_\nu.
\end{equation*} 
We prove that a limit strategy $x^{*}(0)$ is a Nash equilibrium to \ref{NEP22} by contradiction.
Assume, $x^{*}(0)$ is not a Nash equilibrium to \ref{NEP22}, then \textcolor{black}{there exists} $\hat{x}\in X$ such that:
\begin{equation*}
\theta_\nu(x_\nu^*(0),x_{-\nu}^*(0)) > 	\theta_\nu(\hat{x}_\nu,x_{-\nu}^*(0)) \text{ for a leader } 	\nu.
\end{equation*} 
We define the distance as
\begin{equation}\label{Eq:ProofDist}
	\varrho= \theta_\nu(x_\nu^*(0),x_{-\nu}^*(0))- 	\theta_\nu(\hat{x}_\nu,x_{-\nu}^*(0))
\end{equation}
Recall that all objectives $\theta_\nu^\varepsilon$ are continuous in the strategies $x_\nu, x_{-\nu}$ and the smoothing parameter $\varepsilon$, in particular in $\varepsilon=0$.
Then \textcolor{black}{there exists} $\hat{\varepsilon}$ such that for all $\varepsilon\in (0,\hat{\varepsilon})$,   the following relations hold for any $\nu$:
\begin{subequations}\label{Eq:ContinuityProperties}
\begin{align}
	\left|\theta^\varepsilon_\nu(x_\nu^*(\varepsilon),x_{-\nu}^*(\varepsilon))  - 	\theta^\varepsilon_\nu(x_{\nu}^*(0),x_{-\nu}^*(\varepsilon))  \right| & \leq \frac{\varrho}{6} &\text{(continuity in $x_\nu$)}\label{Eq:ContinuityProperties1}\\
	\left|\theta^\varepsilon_\nu(x_\nu^*(0),x_{-\nu}^*(\varepsilon))  - 	\theta^\varepsilon_\nu(x_{\nu}^*(0),x_{-\nu}^*(0))  \right| & \leq \frac{\varrho}{6}&\text{(continuity in $x_{-\nu}$)}\label{Eq:ContinuityProperties2}\\
	\left|\theta^\varepsilon_\nu(x_\nu^*(0),x_{-\nu}^*(0))  - 	\theta_\nu(x_{\nu}^*(0),x_{-\nu}^*(0))  \right| & \leq \frac{\varrho}{6}&\text{(continuity in $\varepsilon$)}\label{Eq:ContinuityProperties3}\\
	\left|\theta^\varepsilon_\nu(\hat{x}_{\nu},x_{-\nu}^*(\varepsilon))  - 	\theta_\nu(\hat{x}_{\nu},x_{-\nu}^*(\varepsilon))  \right| & \leq \frac{\varrho}{6}&\text{(continuity in $\varepsilon$)}\label{Eq:ContinuityProperties4}\\
	\left|\theta_\nu(\hat{x}_{\nu},x_{-\nu}^*(\varepsilon))  - 	\theta_\nu(\hat{x}_{\nu},x_{-\nu}^*(0))  \right| & \leq \frac{\varrho}{6}&\text{(continuity in $x_{-\nu}$)}\label{Eq:ContinuityProperties5}
\end{align}
\end{subequations}
It follows with (\ref{Eq:ContinuityProperties1}-\ref{Eq:ContinuityProperties3}) and triangle inequality, that
\begin{align*}
	\theta_\nu^\varepsilon(x^*_\nu(\varepsilon),x^*_{-\nu}(\varepsilon)) &\geq&	\theta_\nu(x^*_\nu(0),x^*_{-\nu}(0))  ~&-& \left| 	\theta_\nu^\varepsilon(x^*_\nu(\varepsilon),x^*_{-\nu}(\varepsilon)) -\theta_\nu(x^*_\nu(0),x^*_{-\nu}(0))  \right|&\\
&\geq& \theta_\nu(x^*_\nu(0),x^*_{-\nu}(0)) ~&-&	\left|\theta^\varepsilon_\nu(x_\nu^*(\varepsilon),x_{-\nu}^*(\varepsilon))  - 	\theta^\varepsilon_\nu(x_{\nu}^*(0),x_{-\nu}^*(\varepsilon))  \right|&\\
& & ~&-&	\left|\theta^\varepsilon_\nu(x_\nu^*(0),x_{-\nu}^*(\varepsilon))  - 	\theta^\varepsilon_\nu(x_{\nu}^*(0),x_{-\nu}^*(0))  \right|& \\
& & ~&-& 	\left|\theta^\varepsilon_\nu(x_\nu^*(0),x_{-\nu}^*(0))  - 	\theta_\nu(x_{\nu}^*(0),x_{-\nu}^*(0))  \right|& ,
\end{align*}
and similar with (\ref{Eq:ContinuityProperties4}-\ref{Eq:ContinuityProperties5}), we have
\begin{align*}
	\theta_\nu^\varepsilon(\hat{x}_\nu,x^*_{-\nu}(\varepsilon)) &\leq& 	\theta_\nu(\hat{x}_\nu,x^*_{-\nu}(0)) ~&+& \left| 	\theta_\nu^\varepsilon(\hat{x}_\nu,x^*_{-\nu}(\varepsilon))- \theta_\nu(\hat{x}_\nu,x^*_{-\nu}(0)) \right|&,\\
	&\leq& 	\theta_\nu(\hat{x}_\nu,x^*_{-\nu}(0)) ~&+& 	\left|\theta^\varepsilon_\nu(\hat{x}_{\nu},x_{-\nu}^*(\varepsilon))  - 	\theta_\nu(\hat{x}_{\nu},x_{-\nu}^*(\varepsilon))  \right|&\\
	& & ~&+&   	\left|\theta_\nu(\hat{x}_{\nu},x_{-\nu}^*(\varepsilon))  - 	\theta_\nu(\hat{x}_{\nu},x_{-\nu}^*(0))  \right|& ,
\end{align*}
Subtracting these inequality expressions yields with \eqref{Eq:ProofDist} and  (\ref{Eq:ContinuityProperties1}-\ref{Eq:ContinuityProperties5})
\begin{align*}
		\theta_\nu^\varepsilon(x^*_\nu(\varepsilon),x^*_{-\nu}(\varepsilon)) - \theta_\nu^\varepsilon(\hat{x}_\nu,x^*_{-\nu}(\varepsilon)) \geq &~ \theta_\nu(x^*_\nu(0),x^*_{-\nu}(0)) -\theta_\nu(\hat{x}_\nu,x^*_{-\nu}(0)) &\\
	&	-	\left|\theta^\varepsilon_\nu(x_\nu^*(\varepsilon),x_{-\nu}^*(\varepsilon))  - 	\theta^\varepsilon_\nu(x_{\nu}^*(0),x_{-\nu}^*(\varepsilon))  \right|&\\
	&	-	\left|\theta^\varepsilon_\nu(x_\nu^*(0),x_{-\nu}^*(\varepsilon))  - 	\theta^\varepsilon_\nu(x_{\nu}^*(0),x_{-\nu}^*(0))  \right|& \\
	&	-	\left|\theta^\varepsilon_\nu(x_\nu^*(0),x_{-\nu}^*(0))  - 	\theta_\nu(x_{\nu}^*(0),x_{-\nu}^*(0))  \right|& \\
	& - 	\left|\theta^\varepsilon_\nu(\hat{x}_{\nu},x_{-\nu}^*(\varepsilon))  - 	\theta_\nu(\hat{x}_{\nu},x_{-\nu}^*(\varepsilon))  \right|&\\
		& -  	\left|\theta_\nu(\hat{x}_{\nu},x_{-\nu}^*(\varepsilon))  - 	\theta_\nu(\hat{x}_{\nu},x_{-\nu}^*(0))  \right|,&\\
		\geq & ~\varrho -\frac{5}{6}\varrho=\frac{\varrho}{6}>0, &
\end{align*}
which contradicts the assumption that to $x^*(\varepsilon)$ is the Nash equilibrium of \ref{Eq:SmoothNEP}.
Therefore the assumption that $x^*(0)$ is not a Nash equilibrium to \ref{NEP22}, which completes the proof.}
\end{proof}
\textcolor{black}{We remark, that this can also be  understood as a constructive existence proof of Nash equilibria of \eqref{NEP22} if $X_\nu$ is compact for all $\nu=1,\dots,N$, as alternative to Theorem \ref{Th:MainExistence}.}

\textcolor{black}{Note, that we needed to assume convergence of the Nash equilibria.
This strong requirement is weakened on the following two corollaries:
\begin{corollary}[Compact Strategy Sets]
	If in addition $X_\nu$ is compact for all $\nu=1,\dots, N$, the convergence of $x^*_k$ and $x^*(0)$ is not an assumption, because \textcolor{black}{there exists} at least one accumulation point.
\end{corollary}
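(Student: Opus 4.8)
The plan is to observe that the only role played by the convergence hypothesis in Theorem~\ref{Th:constr} is to supply a limit point at which the contradiction argument can be run, and that compactness of the joint strategy set provides such a limit point automatically. So the ``proof'' is really a sequential-compactness reduction to Theorem~\ref{Th:constr}.

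First I would note that, since each $X_\nu$ is nonempty, convex, closed, and now in addition compact, the joint strategy set $X=X_1\times\dots\times X_N\subseteq\R^n$ is compact as a finite product of compact sets. For every $k$ the unique Nash equilibrium $x^*(\varepsilon_k)$ of NEP($\varepsilon_k$) is in particular feasible, hence $x^*(\varepsilon_k)\in X$. Therefore $(x^*(\varepsilon_k))_k$ is a sequence in the compact set $X$, and by the Bolzano--Weierstrass theorem it admits at least one accumulation point $x^*(0)\in X$; equivalently, there is a subsequence $(x^*(\varepsilon_{k_j}))_j$ with $x^*(\varepsilon_{k_j})\to x^*(0)$ as $j\to\infty$.

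Next I would apply Theorem~\ref{Th:constr} to this subsequence. The sequence $(\varepsilon_{k_j})_j$ is still positive and still tends to $0$, and $(x^*(\varepsilon_{k_j}))_j$ is precisely the associated sequence of unique Nash equilibria of NEP($\varepsilon_{k_j}$); by construction it converges to $x^*(0)$. Thus the hypotheses of Theorem~\ref{Th:constr} are met along this subsequence, and we conclude that $x^*(0)$ is a Nash equilibrium of \ref{NEP22}, and hence of the MLFG given by (\ref{follower}) and (\ref{leader}). This shows that the convergence assumed in Theorem~\ref{Th:constr} is not an extra hypothesis in the compact case: it holds automatically after passing to a subsequence, and at least one such limit exists.

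There is essentially no obstacle here; the argument is routine. The only point worth stating carefully is that Theorem~\ref{Th:constr} is already phrased for \emph{any} accumulation point of $(x^*(\varepsilon_k))_k$, so the content of the corollary is exactly the non-emptiness of the set of accumulation points, which is what sequential compactness of $X$ guarantees.
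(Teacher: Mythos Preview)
Your proposal is correct and is exactly the argument the paper has in mind; the paper in fact states this corollary without proof, treating it as immediate from sequential compactness of $X=X_1\times\dots\times X_N$ together with Theorem~\ref{Th:constr}. Your write-up simply makes explicit the Bolzano--Weierstrass step and the passage to a subsequence, which is the intended reasoning.
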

\begin{corollary}[Accumulation Points are Nash Equilibria]
	If the limit of $x^*_k$ is non unique, every accumulation point of the sequence is a Nash equilibrium of the MLFG.
\end{corollary}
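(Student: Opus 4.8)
The plan is to derive this as an immediate consequence of Theorem~\ref{Th:constr}. Fix any accumulation point $\bar{x}$ of the sequence $(x^*(\varepsilon_k))_k$. By definition of an accumulation point there is a subsequence, indexed by $l\in\mathbb{N}$, with $x^*(\varepsilon_{k_l})\to\bar{x}$ as $l\to\infty$. Now $(\varepsilon_{k_l})_l$ is again a positive null sequence, and each $x^*(\varepsilon_{k_l})$ is, by construction, the unique Nash equilibrium of NEP($\varepsilon_{k_l}$); hence the subsequence $(x^*(\varepsilon_{k_l}))_l$ satisfies exactly the hypotheses of Theorem~\ref{Th:constr}, and it converges to $\bar{x}$, so $\bar{x}$ is in particular one of its accumulation points. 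Theorem~\ref{Th:constr} therefore gives that $\bar{x}$ is a Nash equilibrium of NEP, and consequently of the MLFG.

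The only point that merits a comment is that the contradiction argument inside the proof of Theorem~\ref{Th:constr} is stated for ``a limit strategy $x^*(0)$'', yet it uses only continuity of $\varepsilon\mapsto\theta^\varepsilon_\nu$ at $\varepsilon=0$ and of $(x_\nu,x_{-\nu})\mapsto\theta^\varepsilon_\nu$, both evaluated along the fixed convergent subsequence; uniqueness of the limit is never used. Thus, if one prefers a self-contained argument, one may simply repeat the estimates \eqref{Eq:ContinuityProperties1}--\eqref{Eq:ContinuityProperties5} verbatim with $x^*(0)$ replaced by $\bar{x}$ and $\varepsilon$ ranging over $(\varepsilon_{k_l})_l$: assuming $\bar{x}$ were not a Nash equilibrium of NEP, one fixes the gap $\varrho>0$ as in \eqref{Eq:ProofDist} and obtains, along the subsequence, $\theta^\varepsilon_\nu(x^*_\nu(\varepsilon),x^*_{-\nu}(\varepsilon))-\theta^\varepsilon_\nu(\hat{x}_\nu,x^*_{-\nu}(\varepsilon))\ge\varrho/6>0$, which contradicts the Nash property of $x^*(\varepsilon)$ for NEP($\varepsilon$).

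There is essentially no obstacle: the corollary is a repackaging of Theorem~\ref{Th:constr}, whose content is precisely that \emph{every} accumulation point of $(x^*(\varepsilon_k))_k$ is a Nash equilibrium of the MLFG. The clause ``the limit of $x^*_k$ is non unique'' is not an extra hypothesis that the proof exploits; it merely signals the regime in which the corollary carries information beyond Theorem~\ref{Th:constr} as usually read. If desired, one can append the remark that, combined with the preceding corollary, compactness of all $X_\nu$ ensures the set of accumulation points is nonempty, so the statement is not vacuous.
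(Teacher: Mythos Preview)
Your proposal is correct and matches the paper's approach: the paper states the corollary without proof, since Theorem~\ref{Th:constr} already asserts that \emph{any} accumulation point of $(x^*(\varepsilon_k))_k$ is a Nash equilibrium of the MLFG, so the corollary is an immediate restatement for the non-unique case. Your subsequence argument making this explicit is exactly the intended reading.
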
}

\section{Numerical Algorithms}\label{Sec:5}
In the previous sections, we reformulated the MLFG in (\ref{follower},\ref{leader}) as  smooth Nash game \eqref{Eq:SmoothNEP} with a smoothing parameter $\varepsilon>0$ and developed theory confirming the validity of this approach.
In this section, a computational method is provided which is consistent to the developed theory.

In particular, we  apply a gradient type method and recall corresponding convergence theory.
As alternative we propose a Newton like method.

\subsection{The Method}
Due to Lemma \ref{lem:KKTNEPeps}, every \textcolor{black}{leader's} unique optimal solution is characterized by its KKT system \ref{Eq:KKTepsilon} for a fixed smoothing parameter $\varepsilon$.
\textcolor{black}{The multistrategy vector of these solutions characterizes the unique Nash equilibrium of \eqref{Eq:SmoothNEP}; therefore, we aim to find a primal dual pair $z=(x,\lambda)$ which satisfies the concatenated KKT conditions.
We abbreviate the concatenation with $F^\varepsilon(z)=(F_1^\varepsilon(z),F_2^\varepsilon(z))^\top$ such that 
}\begin{subequations}\label{Eq:KKTjoint}
	\begin{align}
	\begin{split}
	\textcolor{black}{F^\varepsilon_1(z)}=&Qx+c+\frac{1}{2} (L^\top +Q_y^{-1}B^\top )^\top  a  \\  & +\frac{1}{2} \sum\limits_{i=1}^m a_i (L^\top -Q_y^{-1}B^\top )^\top _{:,i} ~ \tilde{\phi_\varepsilon}'\left([(L^\top -Q_y^{-1}B^\top ) x]_i\right)+ \begin{bmatrix}
	\nabla_{x_{1}} g_{1} (x_{1}) \lambda_{1}\\
	\vdots\\
	\nabla_{x_{N}}  g_{N} (x_{N}) \lambda_{N}
	\end{bmatrix}
	\end{split}\label{Eq:KKTjoint1}\\
	\textcolor{black}{F^\varepsilon_2(z)}=&\min\left\{ \begin{bmatrix}\lambda_{1}\\\vdots\\\lambda_{N}
	\end{bmatrix},\begin{bmatrix}-g_{1}(x_{1})\\\vdots\\-g_{N}(x_{N})
	\end{bmatrix}  \right\} \label{Eq:KKTjoint2}
	\end{align}
\end{subequations}
 using the notation $Q=\mathrm{diag}(Q_1,\dots,Q_N)$ and $c=(c_1^\top,\dots,c^\top_N)^\top$.
The roots of this system characterize the Nash equilibrium for a fixed relaxation parameter $\varepsilon$.
With this notation, the KKT system can be equivalently expressed as the minimization of the auxiliary function $\Psi_\varepsilon: \R^{n+\bar{m}}\rightarrow\R_+$ where $\bar{m}=m_1+\dots+m_N$ and
$$\Psi_\varepsilon(z)=\frac{1}{2}\|F^\varepsilon(z)\|^2_2=\frac{1}{2}\left(\|F_1^\varepsilon(z)\|^2_2+\|F_2^\varepsilon(z)\|^2_2\right).$$
The global minimum is obtained for an $z^*$ satisfying $\Psi_\varepsilon(z^*)=0$.
For convergence theory, the Lipschitz property of $\Psi_{\varepsilon}$ is crucial; therefore, we prove it in the following lemma.
\begin{lemma}\label{Lemma:Lipschitz}
	The function $\Psi_\varepsilon$ is locally Lipschitz and directionally differentiable.
\end{lemma}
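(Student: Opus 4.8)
The plan is to exhibit $\Psi_\varepsilon$ as a composition of functions that are already known to be locally Lipschitz and directionally differentiable, and then invoke the stability of these two properties under such compositions. Recall that $\Psi_\varepsilon(z) = \tfrac12\|F^\varepsilon(z)\|_2^2$, so it suffices to establish that $F^\varepsilon = (F_1^\varepsilon, F_2^\varepsilon)^\top$ is locally Lipschitz and directionally differentiable, since the map $w \mapsto \tfrac12\|w\|_2^2$ is $C^\infty$ (hence locally Lipschitz and everywhere differentiable), and both properties propagate through composition with a smooth outer function.

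First I would treat $F_1^\varepsilon$. Inspecting \eqref{Eq:KKTjoint1}, every summand is either affine in $z=(x,\lambda)$ (the $Qx+c$ term and the constant term $\tfrac12(L^\top+Q_y^{-1}B^\top)^\top a$), or of the form $a_i (L^\top-Q_y^{-1}B^\top)^\top_{:,i}\,\tilde\phi_\varepsilon'\big([(L^\top-Q_y^{-1}B^\top)x]_i\big)$, or a product $\nabla_{x_\nu}g_\nu(x_\nu)\lambda_\nu$. The middle terms are smooth compositions: $\tilde\phi_\varepsilon$ is assumed at least twice differentiable (Assumption \ref{Assump:SNEP}), so $\tilde\phi_\varepsilon'$ is $C^1$, hence locally Lipschitz, and it is precomposed with the linear map $x\mapsto (L^\top-Q_y^{-1}B^\top)x$; a $C^1$ function composed with a linear map is locally Lipschitz and (Fréchet, hence directionally) differentiable. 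The products $\nabla_{x_\nu}g_\nu(x_\nu)\lambda_\nu$ are bilinear-type expressions in the $C^1$ quantity $\nabla_{x_\nu}g_\nu(x_\nu)$ (here we use that $g_\nu$ is at least twice differentiable, so its Jacobian is $C^1$) and the linear coordinate $\lambda_\nu$; such products are locally Lipschitz on bounded sets and differentiable. Summing finitely many locally Lipschitz, directionally differentiable maps preserves both properties, so $F_1^\varepsilon$ is locally Lipschitz and directionally differentiable.

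Next I would treat $F_2^\varepsilon$ from \eqref{Eq:KKTjoint2}, which is the componentwise minimum of the smooth (indeed $C^1$, by twice-differentiability of each $g_\nu$) vector-valued maps $\lambda \mapsto \lambda$ and $x\mapsto -g(x)$, where $g=(g_1,\dots,g_N)$. The function $\min\{\cdot,\cdot\}:\R^2\to\R$ is globally Lipschitz (with constant $1$) and directionally differentiable everywhere, with $\min'(s,t;u,v)$ equal to $u$, $v$, or $\min\{u,v\}$ according to the sign of $s-t$. Composing this with the $C^1$ pair $(\lambda_j, -g_j(x))$ and applying the chain rule for directional derivatives of a (locally Lipschitz) outer function with a differentiable inner map yields that each component of $F_2^\varepsilon$ is locally Lipschitz and directionally differentiable; stacking finitely many components preserves this. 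Finally, $\Psi_\varepsilon = h\circ F^\varepsilon$ with $h(w)=\tfrac12\|w\|_2^2$ smooth gives the claim, again by the chain rule: $\Psi_\varepsilon'(z;d) = \langle F^\varepsilon(z), (F^\varepsilon)'(z;d)\rangle$, and local Lipschitzness is immediate from that of $F^\varepsilon$ together with local boundedness of $F^\varepsilon$ and of the derivative of $h$ on bounded sets.

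The only mildly delicate point — and the one I would state carefully rather than wave through — is the composition rule for directional derivatives: if $\psi$ is locally Lipschitz and directionally differentiable and $G$ is (Fréchet) differentiable, then $\psi\circ G$ is directionally differentiable with $(\psi\circ G)'(z;d) = \psi'(G(z); G'(z)d)$. This is standard (it follows from writing $G(z+td) = G(z) + tG'(z)d + o(t)$ and using the local Lipschitz bound on $\psi$ to absorb the $o(t)$ term), but it is the structural fact that makes the argument go through for the nonsmooth $\min$ pieces. Everything else is bookkeeping: finitely many sums and products of locally Lipschitz, directionally differentiable functions remain so, and precomposition with linear or $C^1$ maps is harmless.
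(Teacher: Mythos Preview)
Your proposal is correct and follows essentially the same line as the paper: split $\Psi_\varepsilon$ according to the smooth part $F_1^\varepsilon$ (which is $C^1$ by twice differentiability of $\tilde\phi_\varepsilon$ and $g_\nu$) and the nonsmooth part $F_2^\varepsilon$ (the componentwise $\min$), and conclude via composition. The paper organizes this as $\Psi_\varepsilon=\tfrac12\|F_1^\varepsilon\|^2+\tfrac12\|F_2^\varepsilon\|^2$ and rewrites the $\min$ via the absolute-value identity $\min\{s,t\}=\tfrac12(s+t-|s-t|)$, whereas you work directly with the known Lipschitz and directional differentiability of $\min$; your explicit justification of the chain rule for directional derivatives (Lipschitz outer, differentiable inner) is in fact more careful than the paper's one-line appeal to ``composition of directionally differentiable functions.''
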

\begin{proof}
We verify the  properties for each part of the sum separately.

\noindent
(i) $\frac{1}{2}\|F_1(z)\|_2^2\in C^1$, as a composition of $C^1$ functions because $\phi_\varepsilon$ is assumed to be twice differentiable.
Therefore, this part is locally Lipschitz and directionally differentiable. \\
(ii) $\frac{1}{2}\|F_2(z)\|_2^2=\frac{1}{2}\sum\limits^m_{i=1} \min^2\{\lambda_i,-g_i(x)\}=\frac{1}{8}\sum\limits_{i=1}^m \left(\lambda_i-g_i(x)-|\lambda_i+g_i(x)|\right)^2$ is locally Lipschitz as a composition of locally Lipschitz functions.
It is also directionally differentiable as it is also a composition of directionally differentiable functions.
\end{proof}

We are interested in the solution of the system for $\varepsilon$ close to zero.
However, the problem characteristics are poor for very small $\varepsilon$ and we expect bad numerical performance with arbitrary initial values.
Therefore, we propose to solve a sequence of minimization problems:
\begin{equation*}
\min\limits_z ~\Psi_\varepsilon(z)\quad \mathrm{s.t.} ~z\in\R^{n+\bar{m}},
\end{equation*}
 for a decreasing sequence of \textcolor{black}{positive numbers} 	 $(\varepsilon_i)_{i\in\mathbb{N}}$.
 This approach returns a sequence of KKT points $\left(z^*(\varepsilon_{i})\right)_{i\in\mathbb{N}}=\left(x^*(\varepsilon_{i}), \lambda^*(\varepsilon_{i})\right)_{i\in\mathbb{N}}$ whose primal part $\left(x^*(\varepsilon_i)\right)_{i\in\mathbb{N}}$  is  the Nash equilibrium of NEP$(\varepsilon_{i})$.
 We  use the solution $z^*(\varepsilon_{i})$ as initial value for the subsequent solving for $\varepsilon_{i+1}$.

To further increase the quality of the initial values, we propose an update for the primal variables $x$  based on formal Taylor expansion of the map $\varepsilon\mapsto x^*(\varepsilon)$.
We compute the derivative of the objectives of the Nash game with respect to $\varepsilon$ which implicitly characterize $\frac{\partial x}{\partial \varepsilon}$.
For $\nu=1,\dots, N$, we have
\begin{equation*}
	\frac{\mathrm{d}}{\mathrm{d}\varepsilon} \left(  \nabla_{x_\nu} \theta_\nu^\varepsilon (x_\nu(\varepsilon),x_{-\nu}(\varepsilon)) \right) =0,
\end{equation*}
which leads to the following system
\begin{equation*}
	E\frac{\partial x}{\partial \varepsilon}(\varepsilon)=h.
\end{equation*}
Here, we denote $\tilde{\phi}_\varepsilon(\eta)=\Phi(\eta,\varepsilon)$ to emphasize the explicit dependence on $\varepsilon$, then the linear system has the following coefficient matrix
\begin{equation*}
 E=Q
+\frac{1}{2}\sum\limits_{i=1}^m a_i (L^\top-Q_y^{-1}B^\top)^\top_{:,i}(L^\top-Q_y^{-1}B^\top)_{:,i} \frac{\partial^2 {\Phi}}{\partial t^2}((L^\top-Q_y^{-1}B^\top)^\top_{:,i} x,\varepsilon),
\end{equation*}
and the right-hand-side
\begin{equation*}
	h= \frac{1}{2}\sum\limits_{i=1}^m a_i (L^\top -Q_y^{-1}B^\top )^\top _{:,i} \frac{\partial {\Phi}}{\partial\varepsilon}((L^\top-Q_y^{-1}B^\top)^\top_{:,i} x,\varepsilon).
\end{equation*}
We remark that $E$ is nonsingular since it is composed of the second derivatives of the strictly convex objectives.
We summarize the general approach in the following algorithm.

\begin{algorithm}[H]
	\caption{}\label{Alg:Outer}
	\begin{algorithmic}[1]
		\State \textbf{Initialize} Choose $z^0(\varepsilon_0)=(x^0(\varepsilon_0),\lambda^0(\varepsilon_0))\in\R^{n+\bar{m}}$,  $tol>0$, $\varepsilon_0\in(1,2)$, $\gamma\in(0,1)$.
		\For{$i=0,1,\dots$}
		\State Compute Nash equilibrium of (NEP$(\varepsilon_{i})$)  with initial guess $z^0(\varepsilon_{i})=(x^0(\varepsilon_{i}),\lambda^0(\varepsilon_{i}))$ by $$z^*(\varepsilon_{i})=(x^*(\varepsilon_{i}),\lambda^*(\varepsilon_{i}))=\arg\min\limits_{z\in\R^{n+\bar{m}}} \Psi_{\varepsilon_i}(z),$$ \label{Alg:General3}
		\State Decrease $\varepsilon_{i+1}=\gamma\varepsilon_{i}$,
		\State Compute Taylor update $$d^i=\frac{\partial x}{\partial \varepsilon}(\varepsilon_{i+1} ),$$\label{Alg:General5}
		\State Update initial guess\label{Alg:General6} $x^0(\varepsilon_{i+1})=x^{*}(\varepsilon_{i})-(\varepsilon_i-\varepsilon_{i+1})d^i$ and $\lambda^0(\varepsilon_{i+1})=\lambda^{*}(\varepsilon_{i})$.
		
		\EndFor \State \textbf{end for}
	\end{algorithmic}
\end{algorithm}

In Step \ref{Alg:General5} of the algorithm, we use a forward evaluation of  $\frac{\partial x}{\partial\varepsilon}$ but also $\frac{\partial x}{\partial\varepsilon}(\varepsilon_{i})$ is a valid choice.
In the remainder of this section, we propose two algorithms for computation of the Nash equilibria in Step \ref{Alg:General3}, but other approaches are conceivable, \textcolor{black}{ e.g. diagonalization methods as in  \cite{HR07} or path following techniques \cite{Dirkse1995,Ferris1999}.}

\subsection{Subgradient Method}
To generate the sequence of Nash equilibria, we propose a method which is based on subgradient \textcolor{black}{descent}.
We apply the method of \cite{Bagirov2013} for a fixed smoothing parameter $\varepsilon>0$.
Stationary points of $\Psi_\varepsilon$ are computed as the limit of a sequence of $h$-$\delta$-stationary points.
Bagirov et. al. \cite{Bagirov2013} showed that the limit is a Clarke stationary point.
With this method we obtain the unique Nash Equilibrium of the smoothed game.

Before stating the algorithm and the inherent convergence results, we introduce some terms.

\begin{definition}[$h$-$\delta$ Stationary Point]\label{Def:Wh}
	Let $W_h(x)$ denote the closed convex hull of all possible quasisecants of a locally Lipschitz function $f:\R^n\rightarrow \R$ at the point $x\in\R^n$ with length $h>0$:
	$$W_h(x)=\overline{\mathrm{conv}}\left\{w\in\R^n: \exists d\in\R^n \text{ with } \|d\|=1: w=v(x,d,h)   \right\}.$$
	Then a point $x$ is called a $h$-$\delta$ stationary point of a locally Lipschitz function $f:\R^n\rightarrow \R$ if and only if
	$$\min\left\{\|v\|: v\in W_h(x)  \right\}<\delta.$$
\end{definition}

\begin{lemma}[Termination]
	(1) If $\max\{\|v\|: v\in W_h(z)\}<\infty$ for all iterates $z^k\in\R^{n+\bar{m}}$, the loop in Lines 7-15 terminates after finitely many iterations with a decent direction.
	(2) The loop in Lines 4-20 terminates after finitely many iterations with a $h$-$\delta$-stationary point. 
\end{lemma}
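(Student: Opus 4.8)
The plan is to follow the convergence analysis of the quasisecant method of Bagirov et al.\ \cite{Bagirov2013}, specialised to the locally Lipschitz and directionally differentiable function $\Psi_\varepsilon$ supplied by Lemma \ref{Lemma:Lipschitz}. Fix an outer iterate $z^k$ and abbreviate $C=\max\{\|v\|:v\in W_h(z^k)\}$, finite by the hypothesis of part~(1). The inner loop (Lines 7--15) maintains a finite set $\bar{V}\subseteq W_h(z^k)$ of quasisecants of length $h$, computes the minimal-norm element $v^\ast=\arg\min\{\|v\|:v\in\overline{\mathrm{conv}}\,\bar{V}\}$, and tests whether the normalized direction $d=-v^\ast/\|v^\ast\|$ produces an Armijo-type decrease of $\Psi_\varepsilon$ at $z^k$.

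For part~(1) I would distinguish the three exits of this loop. If $\|v^\ast\|<\delta$, then $z^k$ is an $h$-$\delta$-stationary point in the sense of Definition \ref{Def:Wh} and control returns to the outer loop; if the descent test succeeds, the loop exits with a descent direction. In the remaining case the test fails, and the quasisecant calculus of \cite{Bagirov2013} then returns a new quasisecant $v'\in W_h(z^k)$ whose inner product with $d$ is bounded away from $-\|v^\ast\|$ by a constant depending only on the Armijo parameter; adding $v'$ to $\bar{V}$ and projecting the origin onto the enlarged convex hull (Wolfe's minimal-norm-point estimate) gives
$$\min\{\|v\|^2:v\in\overline{\mathrm{conv}}(\bar{V}\cup\{v'\})\}\ \le\ \|v^\ast\|^2-\kappa,$$
with $\kappa>0$ depending only on $C$, $\delta$, and the algorithm parameters. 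Since the squared minimal norms are nonnegative and drop by at least $\kappa$ at every non-terminating inner step, the loop performs at most $\lceil C^2/\kappa\rceil$ steps and therefore terminates, necessarily either flagging $h$-$\delta$-stationarity or returning a descent direction.

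For part~(2), note first that $\Psi_\varepsilon\ge 0$ and, since $Q$ is positive definite, $\Psi_\varepsilon$ has bounded level sets, so all iterates generated from a fixed $z^0$ remain in a bounded set on which the quasisecants are uniformly bounded; hence the hypothesis of part~(1) holds at every iterate and each call of the inner loop terminates. At outer iteration $k$ (Lines 4--20), either $z^k$ is flagged $h$-$\delta$-stationary, in which case the loop halts with the claimed point, or the inner loop returns a descent direction $d^k$ with $\|v^\ast_k\|\ge\delta$, and the subsequent line search yields $z^{k+1}$ with
$$\Psi_\varepsilon(z^{k+1})\ \le\ \Psi_\varepsilon(z^k)-\sigma\,h\,\|v^\ast_k\|\ \le\ \Psi_\varepsilon(z^k)-\sigma\,h\,\delta$$
for the Armijo constant $\sigma>0$. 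A fixed decrease $\sigma h\delta>0$ per non-terminating outer step is incompatible with $\Psi_\varepsilon\ge 0$ beyond finitely many steps, so the outer loop must reach the $h$-$\delta$-stationary exit in finitely many iterations.

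The main obstacle is part~(1): one must state the quasisecant calculus of \cite{Bagirov2013} precisely enough — in particular that a failed descent test produces a new quasisecant whose inner product with the current search direction is controlled — and then extract the uniform decrease $\kappa$ of the minimal-norm element from the projection geometry; this is the technical core. For part~(2) the only delicate point is keeping the iterates in a bounded region so that $C$ stays finite, which I would reduce to the boundedness of the level sets of $\Psi_\varepsilon$ (or simply inherit the standing hypothesis of part~(1)).
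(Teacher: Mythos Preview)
Your approach is correct and follows the same route as the paper: both rely on the quasisecant framework of Bagirov et al.\ \cite{Bagirov2013}, using local Lipschitz continuity of $\Psi_\varepsilon$ (Lemma~\ref{Lemma:Lipschitz}) for part~(1) and nonnegativity of $\Psi_\varepsilon$ for part~(2). The paper's proof, however, is far terser than yours: it simply verifies these two hypotheses and invokes \cite[Proposition~4.1]{Bagirov2013} and \cite[Proposition~5.1]{Bagirov2013} directly, whereas you unpack the internal mechanics of those propositions (the Wolfe minimal-norm decrease for the inner loop, the fixed-decrement argument for the outer loop).

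One caution: your claim that ``since $Q$ is positive definite, $\Psi_\varepsilon$ has bounded level sets'' is not justified and need not hold in general, since $\Psi_\varepsilon$ also depends on $\lambda$ through the constraint gradients $\nabla_{x_\nu} g_\nu(x_\nu)\lambda_\nu$; indeed, the paper treats boundedness of the level set $\mathcal{L}(z^0)$ as an \emph{assumption} in the subsequent convergence theorem rather than a consequence. For the termination lemma itself this is harmless, as you correctly note that one may simply inherit the standing boundedness hypothesis of part~(1), and part~(2) requires only that $\Psi_\varepsilon\ge 0$.
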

\begin{proof}
	(1) Since $\Psi_\varepsilon$ is locally Lipschitz with Lemma \ref{Lemma:Lipschitz}, \cite[Proposition 4.1]{Bagirov2013} is applicable.\\
	(2) The function $\Psi_\varepsilon$ is bounded from below as it takes nonnegative values only, therefore \cite[Proposition 5.1]{Bagirov2013} is applicable.
\end{proof}

\begin{theorem}[Convergence]
	Assume $\mathcal{L}(z^0)=\left\{z\in\R^{n+\bar{m}}: \Psi_{\varepsilon}(z)\leq\Psi_\varepsilon(z^0)  \right\}$ is bounded and Assumption \ref{Assumption31} is fulfilled.
	Then there exists at least one accumulation point of the sequence $(z^k)_{k\in\mathbb{N}}$ generated by Alg. 2 and any accumulation point is a stationary point of $\Psi_\varepsilon$.
\end{theorem}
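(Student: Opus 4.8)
The plan is to deduce the statement from the abstract convergence theory of Bagirov et al.\ \cite{Bagirov2013} for the quasisecant (subgradient) method on locally Lipschitz functions, once I have checked that the concrete objective $\Psi_\varepsilon$ meets the hypotheses of that framework. First I would note that $\Psi_\varepsilon$ is locally Lipschitz on $\R^{n+\bar{m}}$ by Lemma \ref{Lemma:Lipschitz}, so the quasisecants, the sets $W_h(z)$, and the notion of $h$-$\delta$-stationarity from Definition \ref{Def:Wh} are all well defined along the iteration; together with the termination lemma stated just above, this guarantees that Algorithm 2 is well posed and produces an infinite sequence $(z^k)_{k\in\mathbb{N}}$ (or terminates at an exact stationary point, in which case there is nothing left to prove).

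Next I would confine the iterates to the sublevel set. The method of \cite{Bagirov2013} is a descent method: every accepted step decreases the objective, so $\Psi_\varepsilon(z^{k+1}) \le \Psi_\varepsilon(z^k)$ for all $k$, and hence $z^k \in \mathcal{L}(z^0)$ for every $k$. Since $\Psi_\varepsilon$ is continuous, $\mathcal{L}(z^0)$ is closed; it is bounded by assumption, hence compact. Therefore $(z^k)_k$ is bounded and, by Bolzano--Weierstrass, possesses at least one convergent subsequence whose limit $z^*$ lies in $\mathcal{L}(z^0)$. This proves the existence of an accumulation point.

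Finally I would invoke the main convergence result of \cite{Bagirov2013}: under Assumption \ref{Assumption31} (which encodes the admissibility of the quasisecant family and the schedule $h\downarrow 0$, $\delta\downarrow 0$ of the algorithm parameters) and the boundedness of the sublevel set, every accumulation point of the generated sequence is a Clarke stationary point of the objective, i.e.\ $0\in\partial\Psi_\varepsilon(z^*)$. Applying this to the accumulation point produced above yields the claim. Note that $\Psi_\varepsilon$ is trivially bounded below, since $\Psi_\varepsilon\ge 0$, which is an additional consistency check for the framework.

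The step I expect to be the main obstacle is not any single estimate but the bookkeeping of matching the present setup to Bagirov's abstract hypotheses --- in particular verifying Assumption \ref{Assumption31} for the quasisecants actually employed, and checking that $\max\{\|v\|: v\in W_h(z)\}$ stays finite, indeed uniformly bounded over the compact sublevel set, as already required by the termination lemma. Once that matching is in place the conclusion follows directly from \cite{Bagirov2013}, so the substantive work is entirely in this verification.
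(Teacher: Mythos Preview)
Your proposal is correct and follows essentially the same route as the paper: use Lemma~\ref{Lemma:Lipschitz} to obtain local Lipschitzness of $\Psi_\varepsilon$, then invoke the convergence result of \cite{Bagirov2013} (specifically their Proposition~5.2) together with boundedness of $\mathcal{L}(z^0)$. Your write-up is more explicit than the paper's two-line proof---you spell out the descent property, closedness and compactness of the sublevel set, and Bolzano--Weierstrass---but the underlying argument is the same; note also that Assumption~\ref{Assumption31} is a hypothesis of the theorem, so no separate verification is needed.
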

\begin{proof}
	Due to Lemma \ref{Lemma:Lipschitz}, $\Psi_{\varepsilon}$ is locally Lipschitz and therefore, \cite[Proposition 5.2]{Bagirov2013} is applicable.
	The boundedness of $\mathcal{L}(z^0)$ implies that \textcolor{black}{there exists} at least one accumulation point.
\end{proof}
Bagirov et. al. \cite{Bagirov2013} state that subgradients are in particular quasisecants and therefore,  we limit ourselves to the usage of subgradients as decent directions and to $h=0$ in the implementations.
The algorithm is stated as Algorithm \ref{Alg:Subgr} below.
\begin{algorithm}[H]
	\caption{Subgradient Method}\label{Alg:Subgr}
	\begin{algorithmic}[1]
		\State \textbf{Initialize} $h_0>0$ , $\delta_0>0$, $\gamma\in(0,1)$, $z^0\in\R^{n+\bar{m}}$, $d_0\in\R^{n+\bar{m}}$ with $\|d_0\|=1$, $0<c_2\leq c_1\leq 1$, $\varepsilon>0$.

		\For{$k=0,\dots$}
		\State $\bar{z}_1=z^k$,
		\For{$j=1,\dots$}\Comment{compute $h$-$\delta$-stationary point}
		\State Compute quasisecant $v_0=v(\bar{z}_j,d_0,h)$,
		\State $\tilde{v}_0=v_0$,
		\For{$i=0,1,\dots$}\Comment{find decent direction}
		\State $c_i=\arg\min\{\|cv_i+(1-c)\tilde{v}_i\|^2_2~ | c\in(0,1)\}$,
		\State $\bar{v}_i=c_iv_i+(1-c_i)\tilde{v}_i$,
		\If{$\|\bar{v}_i\|\leq \delta_k$}  return $v^j=\bar{v}_i$  .\EndIf
		\State $d_i=-\frac{\bar{v}_i}{\|\bar{v}_i\|}$,
		\If{$\Psi_\varepsilon(\bar{z}_j+hd_{i})-\Psi_\varepsilon(\bar{z}_j)\leq -c_1h\|\bar{v}_i\|$} return $v^j=\bar{v}_i$. \EndIf
		\State Compute quasisecant $v_{i+1}=v(x,d_i,h)$,
		\State $\tilde{v}_{i+1}=\bar{v}_i$.
		\EndFor \State \textbf{end for}
		\If{$\|v^j\|\leq \delta_k$}  Stop.  \EndIf
		\State $d^j=-\frac{v^j}{\|v^j\|}$,
		\State Compute step length such that
		$$\sigma_j=\arg\max\{\Psi_\varepsilon(\bar{z}_j+\sigma d^j)-\Psi_\varepsilon(\bar{z}_j)\leq -c_2\sigma\|v^j\|~|\sigma>0\},$$
		\State Update $\bar{z}_{j+1}=\bar{z}_j+\sigma_j d^j$.
    
		\EndFor \State \textbf{end for}
		\State $z^{k+1}=\bar{z}_j$,
		\State $h_{k+1}=\gamma h_k$, 
		\State $\delta_{k+1}=\gamma \delta_k $.
		\EndFor \State \textbf{end for}

	\end{algorithmic}
\end{algorithm}

\subsection{Nonsmooth Newton Method}
Next, we present an improved method.
The joint KKT system  \eqref{Eq:KKTjoint} leads to the problem to find the unique $z^*(\varepsilon)=(x^*(\varepsilon),\lambda^*(\varepsilon))$ that satisfy
\begin{equation*}
	F^\varepsilon(z)=0.
\end{equation*}
This  is a nonlinear and nonsmooth system of equations which depend on the  parameter $\varepsilon>0$.
The generalized Newton method can be written as the solving of a sequence of the linear systems
\begin{equation*}
H \left(z^{k+1}-z^k\right) = -F^\varepsilon(z^k),
\end{equation*}
for an element $H\in\partial F^\varepsilon(z^k)$ of the Clarke subdifferential of $F^\varepsilon$.
The explicit structure of a generalized Jacobian $H$ can be found in \ref{App:Newton}.

Since $H$ is not necessarily regular,  we verify this property in Step \ref{Alg:Newton5} of  Algorithm \ref{Alg:Newton} and use a first order decent direction if necessary.
This subgradient decent also serves as globalization strategy.
\begin{algorithm}[H]
	\caption{Nonsmooth Newton Method}\label{Alg:Newton}
	\begin{algorithmic}[1]
		\State \textbf{Initialize} Choose $z^0=(x^0,\lambda^0)\in\R^{n+\bar{m}}$,  $\beta\in(0,1)$, $\sigma\in(0,0.5)$, $tol>0$, $\varepsilon>0$.
		
		\For{$k=0,\dots$}
		\If{$\Psi_{\varepsilon}\leq tol$}  Stop. \EndIf
		\State Let $H\in\partial F^\varepsilon(z^k)$,
		\If{$H$ singular} do subgradient decent of $\Psi_{\varepsilon}$, thus choose $$s^k\in-\partial\Psi_{\varepsilon}(z^k),$$\label{Alg:Newton5}
		and the step length $t_k=\max\{\beta^l|l=0,1,\dots \}$ which fulfills the Armijo condition 
		$$\Psi_{\varepsilon}(z^k+t^ks^k)\leq \Psi_{\varepsilon}(z^k) + t^k \sigma  {s^k}^\top s^k,$$
		\Else ~ let $t_k=1$ and compute Newton step by solving  $$ Hs^k=-F^{\varepsilon}(z^k),$$
		\EndIf
		\State Update $z^{k+1}=z^k+t_ks^k$.
		\EndFor \State \textbf{end for}
	\end{algorithmic}
\end{algorithm}
For further discussions and convergence analysis we refer to e.g. \cite{Qi1993}.

\section{Numerical Results}\label{Sec:6}
In the previous sections, we proposed an algorithm with gradient updates of the primal variables.
This included the computation of Nash equilibria for a sequence of smoothing parameter $(\varepsilon_i)_{i\in \mathbb{N}}$.
For this computation, we introduced a subgradient and a Newton method.
The presented numerical results are obtained for the data sets in \ref{App:Data} which are adapted from \cite{HuFu2013}.
All plots are generated for the Data Set 1, however experiments with Data Set 2 produced similar graphics.

The naive approach of computing a sequence of Nash equilibria is to use the Nash equilibrium of a larger smoothing parameter as initial for the subsequent computation with the smaller smoothing parameter.
The main purpose of the outer Taylor expansion based update in Algorithm \ref{Alg:Outer} (Step \ref{Alg:General5} and \ref{Alg:General6}) is to improve the quality of the initials in order to reduce the computational effort in Step \ref{Alg:General3}.

In the upper left part of Figure \ref{Fig:Fig1}, we observe the quadratic decent of the error for decreasing smoothing parameter.
In the upper right part, the Taylor update is exemplary illustrated for one component of the leader variables.
The blue dots indicate each the Nash equilibrium of a NEP$(\varepsilon_i)$, $x^*(\varepsilon_i)$.
A black line represents the Taylor update and the lower end of a black line indicates the updated initial values $x^0(\varepsilon_{i+1})$ for the subsequent Nash equilibrium computation.

The lower part of Figure \ref{Fig:Fig1} is dedicated to illustrate the importance of large smoothing parameter for the first computations of Nash equilibria.
Since the problem gets closer to it original nonsmooth formulation as $\varepsilon$ decreases, the problem is also more challenging to solve for both Subgradient and Nonsmooth Newton  method.
We observe this expected behavior, in particular if we compare the number of iterations for $\varepsilon=1.6$ and $\varepsilon=0.1$.

\begin{figure}
	\begin{minipage}{0.5\textwidth}
		\begin{center}
			\includegraphics[width=\textwidth]{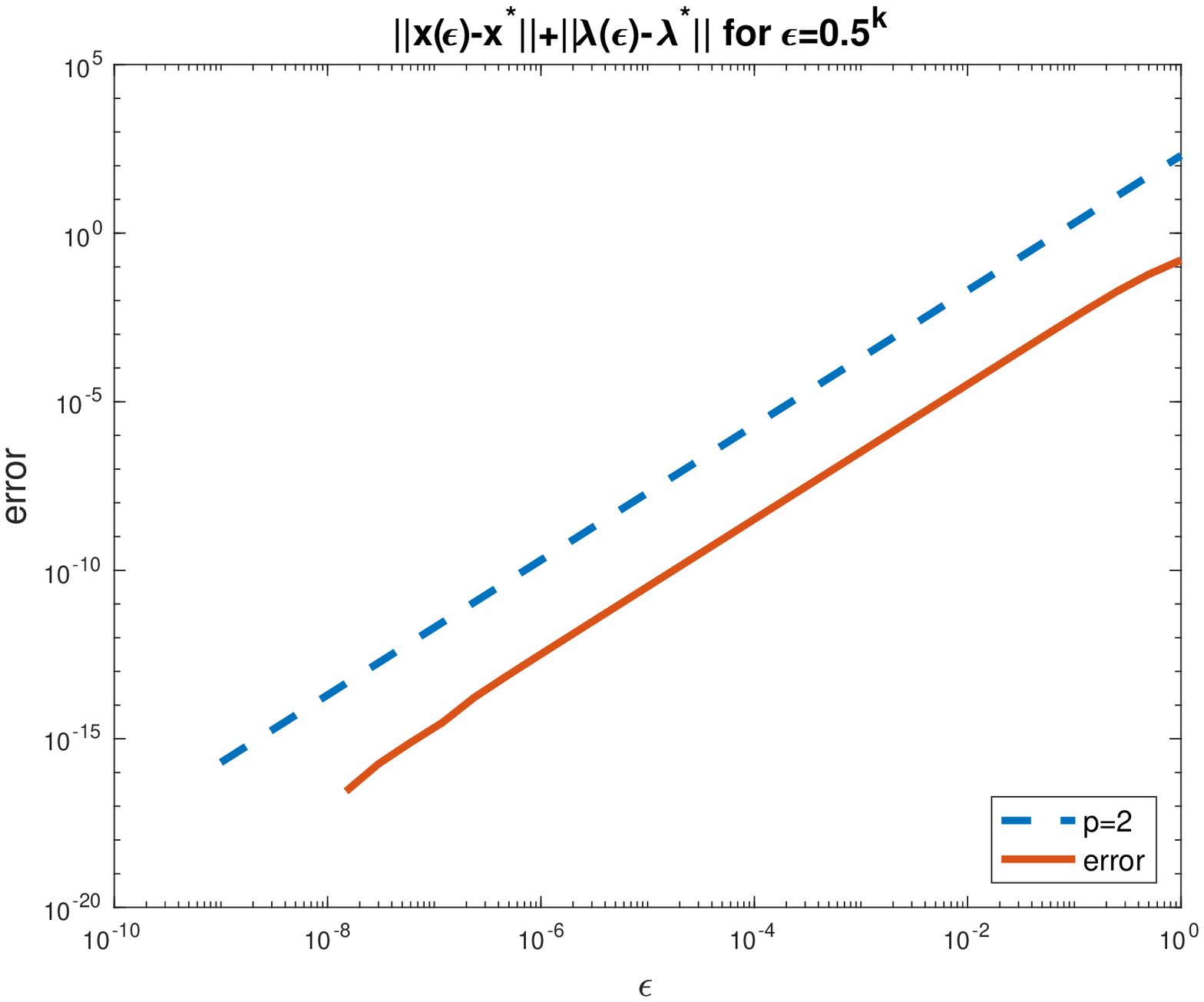} \\
			\includegraphics[width=\textwidth]{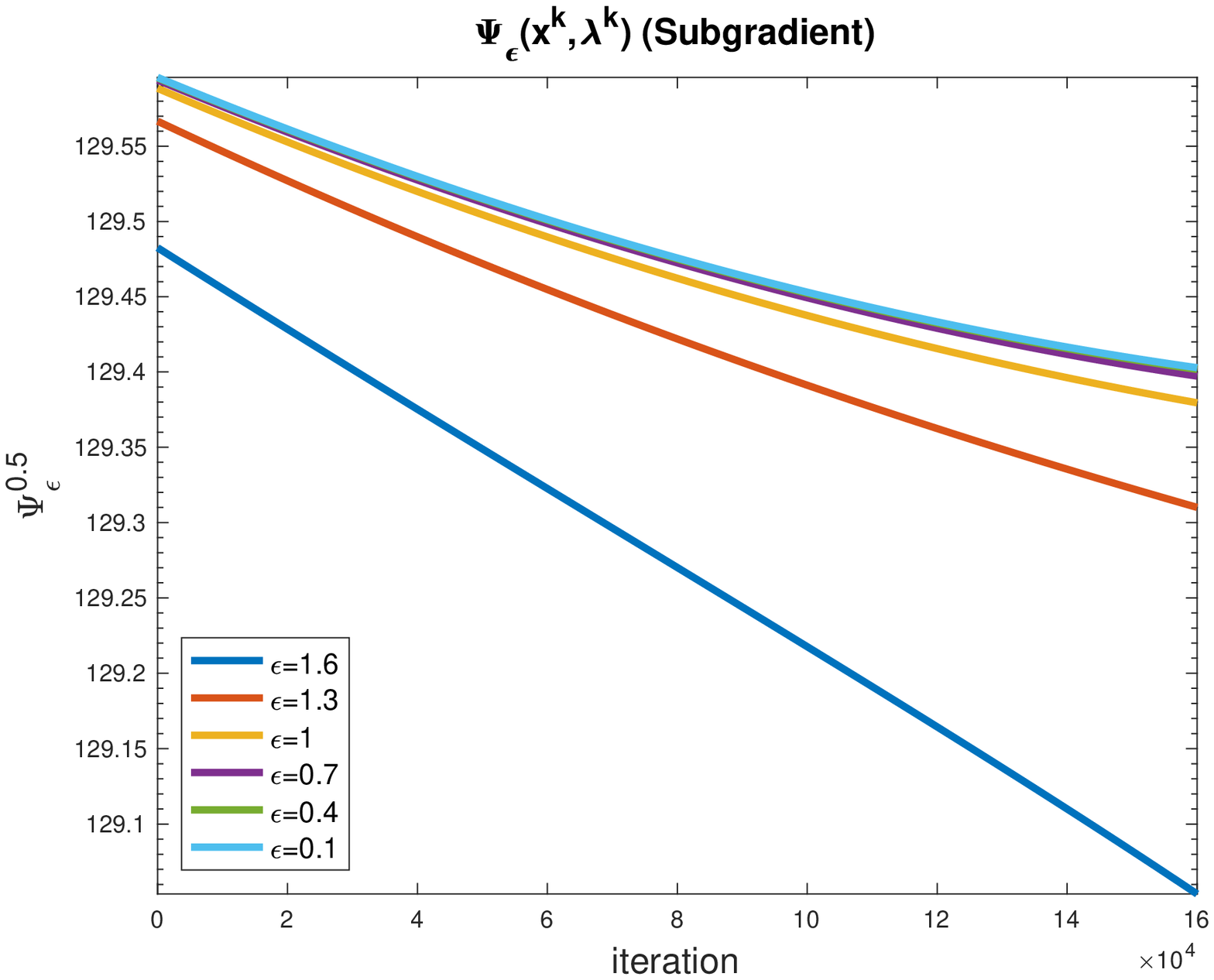} 
		\end{center}
	\end{minipage}\begin{minipage}{0.5\textwidth}
			\begin{center}
		\includegraphics[width=\textwidth]{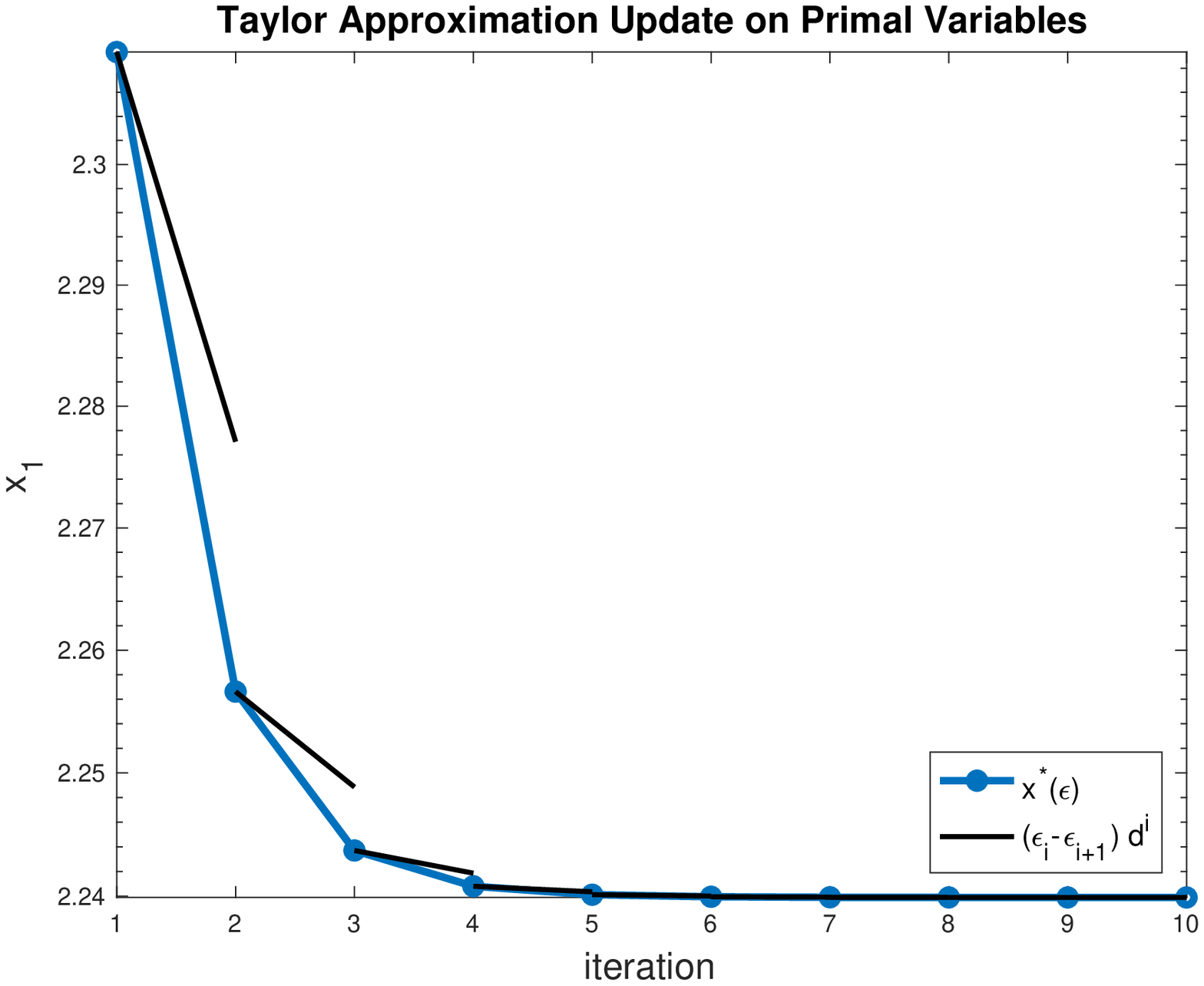} \\
		\includegraphics[width=\textwidth]{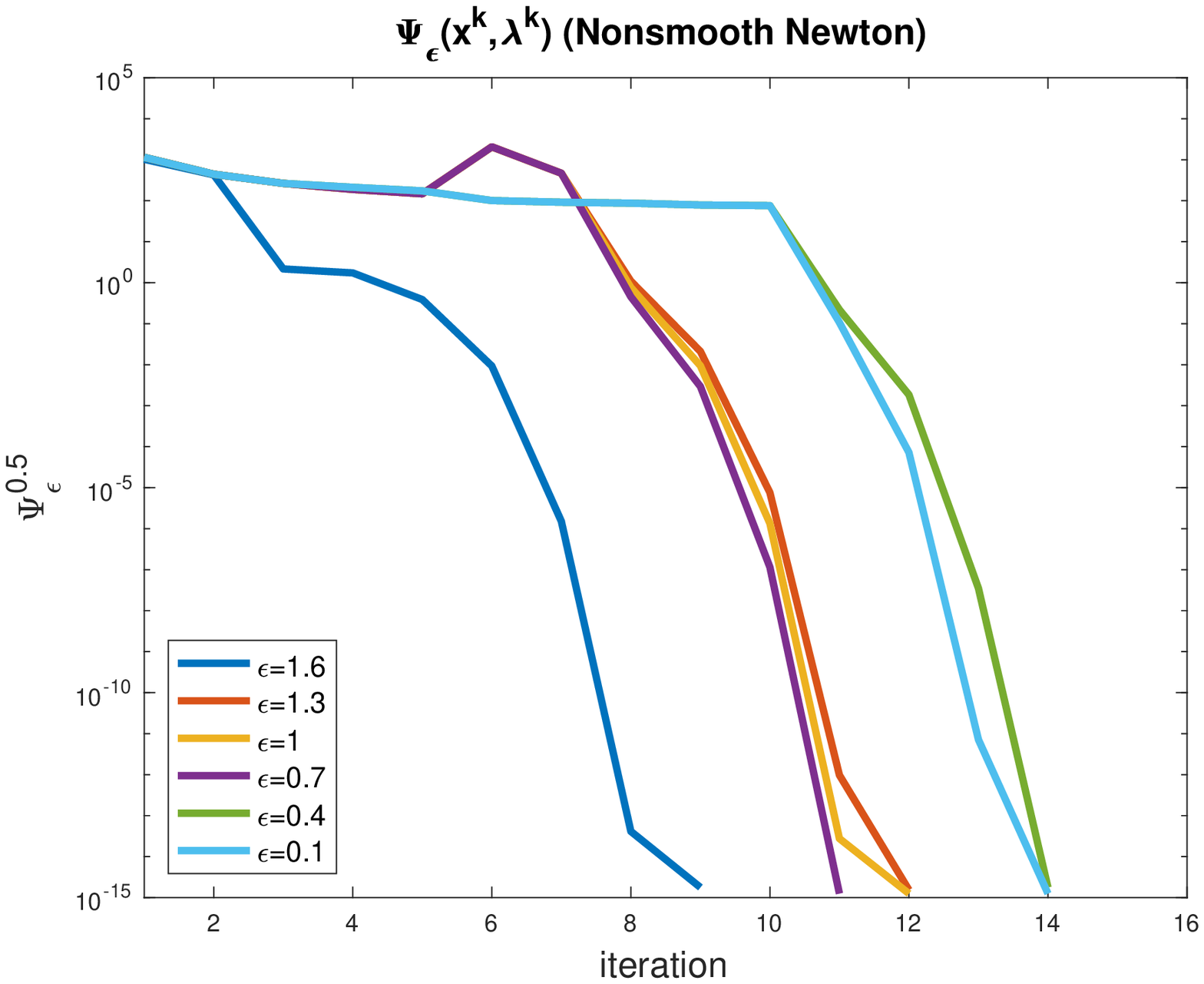} 
	\end{center}
\end{minipage}
	\caption{\label{Fig:Fig1}Upper left: Quadratic convergence to the limit Nash equilibrium $x^*(0)$, right: Taylor expansion based update on primal variables, exemplary for first leader variable; Lower: comparison of Subgradient and Nonsmooth Newton method for varying smoothing parameter. }
\end{figure}

As already seen in Figure \ref{Fig:Fig1}, the subgradient based method suffers from characteristically slow convergence for our instances.
In Figure \ref{Fig:Fig2}, all iterations for a sequence of decreasing smoothing parameter are shown.

\begin{figure}[H]
\begin{center}
	\includegraphics[width=0.5\textwidth]{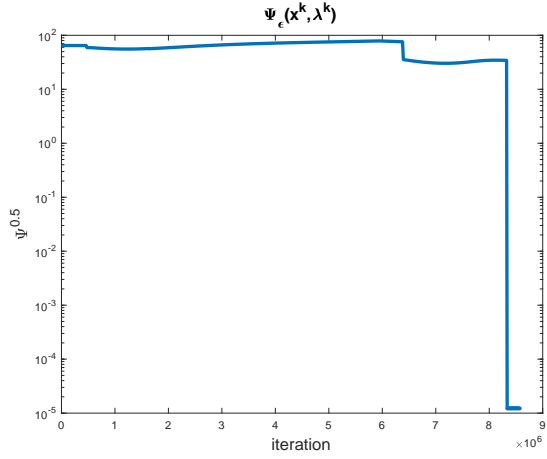} 
\end{center}

	\caption{\label{Fig:Fig2}Subgradient Method; all iterations for decreasing sequence of smoothing parameters.}
\end{figure}

Similarly to Figure \ref{Fig:Fig2},  left in Figure \ref{Fig:Fig3}, all iterations for a sequence of decreasing smoothing parameter are shown for the Nonsmooth Newton.
The alternating behavior is due to the decreasing parameter changing the minimization problem.
The right part of Figure  \ref{Fig:Fig3} illustrates the decrease in $\Psi_\varepsilon$ for different random initial values but fixed smoothing parameter.

\begin{figure}[H]
	\label{Fig:Fig3}
	\begin{minipage}{0.5\textwidth}
		\begin{center}
			\includegraphics[width=\textwidth]{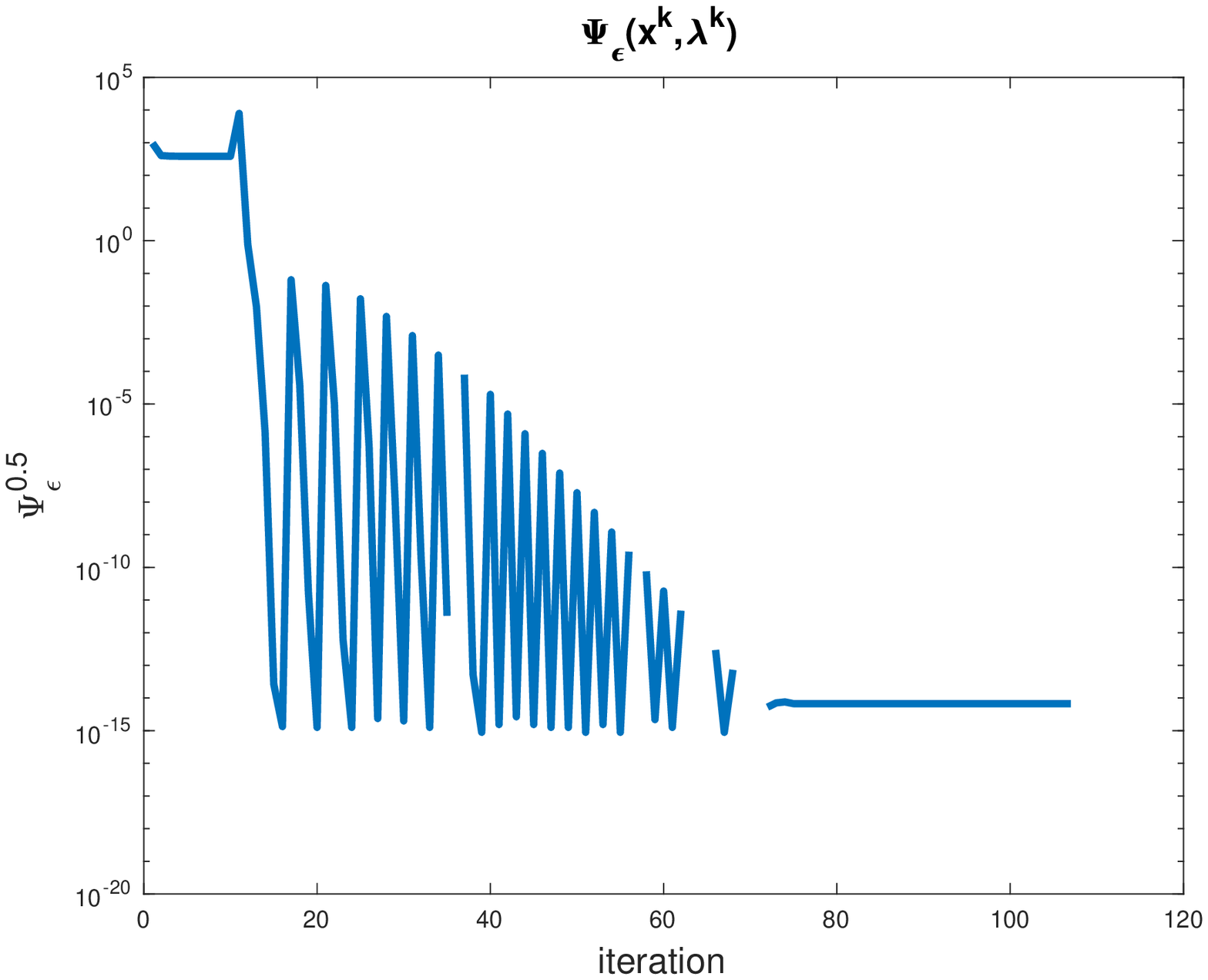} 
		\end{center}
	\end{minipage}\begin{minipage}{0.5\textwidth}
		\begin{center}
			\includegraphics[width=\textwidth]{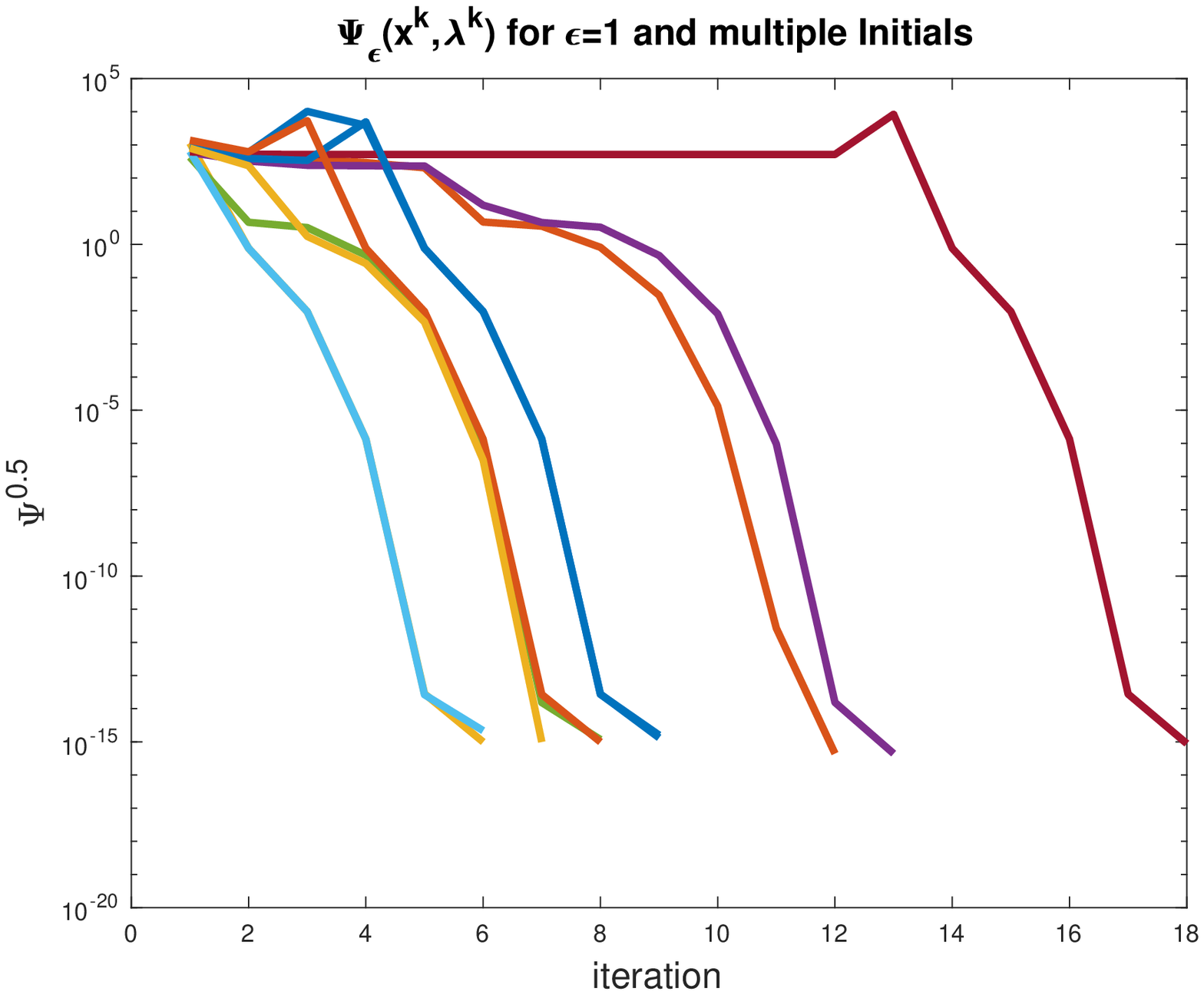} 
		\end{center}
	\end{minipage}
	
	\caption{Nonsmooth Newton Method; left all iterations for decreasing sequence of smoothing parameters, right for one smoothing parameter and multiple initials.}
\end{figure}

\section{Conclusion and Outlook}
We presented a quadratic MLFG and explicitly computed the best response of the follower player.
With this best response we derived a Nash game formulation where existence theory is available.
Furthermore, we smoothed the best response function and formulate the MLFG as smooth Nash game and proved existence and uniqueness of the Nash equilibrium for all smoothing parameters.
We followed an all KKT approach to characterize the corresponding Nash equilibrium.
For decreasing positive smoothing parameter, we showed that the limit of Nash equilibria satisfies  the conditions of \textcolor{black}{S-stationarity}.
Further, we demonstrated that \textcolor{black}{S-stationary} points are eventually Nash equilibria of the MLFG.
Numerically, we computed Nash equilibria with a globalized nonsmooth Newton and compare with a standard methods based on subgradients.
For efficient computation, we updated the primal variables by a Taylor approximation before a subsequent computation of the Nash equilibrium for a smaller smoothing parameter.

\textcolor{black}{The numerical comparison to alternative methods designed for MPECs and EPECs are planned as future research.}
Also, the extension possibilities discussed in Appendix \ref{Sec:Ext} are  subject of further investigation.

\appendix
\section{Appendix}
\subsection{Remarks on Extensions to the Model}\label{Sec:Ext}
In the following, we mention two generalizations to the follower problem in \eqref{follower}, which seem obvious to include.
We explain challenges to motivate future research.

In  \eqref{follower}, it is assumed that the matrix $Q_y$ is positive definite and diagonal.
A popular approach in quadratic programming is to enforce diagonality of symmetric positive definite matrices by introducing an auxiliary variable $z=D^\top y$, where we have the Cholesky decomposition $Q_y=DD^\top$, which exists for symmetric  positive definite  $Q_y$.
The follower's problem can be equivalently formulated as a minimization problem in $z$:
\begin{equation*}
\min\limits_{z\in \R^m} \frac{1}{2}z^\top z-b(x)^\top D^{-\top}z \quad \st\quad  z\geq D^\top l(x).
\end{equation*}
As in Lemma \ref{Th:bestresponse}, we   derive the solution to this optimization problem explicitly:
\begin{equation*}
z(x)=\max\{D^{-1}b(x),D^\top l(x)\},
\end{equation*}
and we recover the follower's best response in this setting:
\begin{equation*}
y(x)=D^{-\top}\max\{D^{-1}b(x),D^\top l(x)\}.
\end{equation*}
However, unlike \eqref{solY2}, this function is not necessarily componentwise convex.
But this property is essential to guarantee convexity of the leaders' objectives.
This property is required  for the proof of existence of Nash equilibria in Theorem \ref{Th:MainExistence} as it is based on Kakutani fixed-point theorem.
An other obvious extension to the follower problem is to incorporate upper bounds besides the discussed lower bounds, i.e. $l(x)\leq y\leq u(x)$.
We can also derive an explicit representation of the best response, e.g. via projection of the objective's gradient into the feasible set.
For that let $\tilde{y}_i(x)=(Q_y)_{ii}^{-1}b_i(x)$, then the best response is for $i=1,\dots,m$
\begin{equation*}
y_i(x)=\mathrm{median}( l_i(x),\tilde{y}_i(x),u_i(x))=\begin{cases}

l_i(x), & \tilde{y}_i(x)\leq l_i(x),\\
u_i(x), &  \tilde{y}_i(x)\geq u_i(x),\\
\tilde{y}_i(x), & \text{else}.
\end{cases}
\end{equation*}
Like in the previous case, this best response is not necessarily convex; therefore, the existence result in Theorem \ref{Th:MainExistence} does not apply.

\subsection{Subgradient Method}
In the following, we state the definition of quasisecants and a theorem which relates quasisecants and subgradients.
Furthermore, we state a assumption which is needed in the convergence theorem of the subgradient method.
All is adapted from \cite{Bagirov2013} and can be found there in an extended form.

\begin{definition}[Quasisecant]
	A vector $v=v(x,d,h)\in\R^n$ is called a quasisecant of a locally Lipschitz function $f:\R^n\rightarrow \R$ at the point $x\in\R^n$ in direction $d\in\R^n$ with $\,d\,=1$ with the length $h>0$ if and only if
	\begin{equation*}
		f(x+hd)-f(x)\leq h\langle v,d\rangle,
	\end{equation*}
	and
		\begin{equation*}
	v\in\partial_{d,h}f(x)+B_{O(h)},
	\end{equation*}
	where  $\partial_{d,h} f=\cup_{t\in[0,h]}\partial f(x+td)$ denotes the union of all Clarke subdifferentials over the set $\mathrm{conv}(x,x+hd)$ and $B_{O(h)}$ denotes a ball for which $O(h)\rightarrow0$ for $h\rightarrow0$. 
\end{definition}

For the convergence proof it is necessary to study the relation of $W_h(x)$ (Definition \ref{Def:Wh}) and the subdifferential $\partial f(x)$ and therefore, the following assumption is crucial.
\begin{assumption}\label{Assumption31}
	At any given point $x\in\R^n$ there exists $\delta=\delta(x)>0$ such that $O(y,h)\downarrow0$ uniformly as $h\downarrow0$ for all $y\in B_\delta(x)$ that is for any $\eta>0$ there exists $h(\eta)>0$ such that $O(y,h)<\eta$ for all $h\in (0,h(\eta))$ and $y\in B_\delta(x)$.
\end{assumption}
In particular, this assumptions guarantees a certain relation between quasisecants and subgradients.
\begin{theorem}
	Assume that a function $f$ satisfies Assumption \ref{Assumption31}.
	Then at a given point $x\in\R^n$ for any $\eta>0$ there exists $\delta=\delta(\eta)$ and $h(\eta)>0$ such that
	\begin{equation*}
		W_h(y)\subset \partial f+ B_\eta,
	\end{equation*}
	for all $h\in(0,h(\eta))$ and $y\in B_\delta(x)$.
	Furthermore, it holds for locally Lipschitz function that  the limit as $h\rightarrow 0$ of the $W_h(x)$ lies in the subdifferential, i.e.
	\begin{equation*}
		W_0(x)\subset \partial f(x).
	\end{equation*}
\end{theorem}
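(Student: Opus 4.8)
The plan is to reduce the whole statement to the first inclusion, since the second one follows from it by letting $\eta\downarrow 0$. For the first inclusion the two ingredients I would use are the outer (upper) semicontinuity of the Clarke subdifferential of a locally Lipschitz function and the uniform-in-$y$ decay of the error term $O(y,h)$ guaranteed by Assumption~\ref{Assumption31}. Throughout, ``$\partial f$'' in the statement is read as $\partial f(x)$ at the fixed base point.

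First I would fix $\eta>0$ and invoke outer semicontinuity of $\partial f$: there is $r>0$ with $\partial f(z)\subset\partial f(x)+B_{\eta/2}$ for every $z\in B_r(x)$. Then, for $y\in B_{r/2}(x)$, $h\le r/2$, and any unit vector $d$, the whole segment $\{y+td:t\in[0,h]\}$ lies in $B_r(x)$, hence $\partial_{d,h}f(y)=\bigcup_{t\in[0,h]}\partial f(y+td)\subset\partial f(x)+B_{\eta/2}$. Next I would apply Assumption~\ref{Assumption31} at $x$: there is $h(\eta)>0$ so that $O(y,h)<\eta/2$ whenever $0<h<h(\eta)$ and $y\in B_{\delta(x)}(x)$. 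Putting $\delta(\eta)=\min\{r/2,\delta(x)\}$ and shrinking $h(\eta)$ below $r/2$, every quasisecant satisfies $v(y,d,h)\in\partial_{d,h}f(y)+B_{O(y,h)}\subset\partial f(x)+B_{\eta/2}+B_{\eta/2}=\partial f(x)+B_\eta$ for all $y\in B_{\delta(\eta)}(x)$, $h\in(0,h(\eta))$, and $\|d\|=1$. Since $\partial f(x)+B_\eta$ is convex and closed (the Clarke subdifferential of a locally Lipschitz function being compact convex), it contains the closed convex hull of the quasisecants, i.e.\ $W_h(y)\subset\partial f(x)+B_\eta$, which is the first claim.

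For the second claim I would specialize the first to $y=x$: for every $\eta>0$ there is $h(\eta)>0$ with $W_h(x)\subset\partial f(x)+B_\eta$ for all $h\in(0,h(\eta))$. Interpreting $W_0(x)$ as the outer limit $\limsup_{h\downarrow0}W_h(x)$ --- equivalently, the set of cluster points of sequences $v_k\in W_{h_k}(x)$ with $h_k\to0$ --- any such cluster point lies in $\partial f(x)+B_\eta$ for every $\eta>0$, hence in $\bigcap_{\eta>0}\bigl(\partial f(x)+B_\eta\bigr)=\partial f(x)$ because $\partial f(x)$ is closed. Thus $W_0(x)\subset\partial f(x)$.

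The routine parts are the triangle-inequality bookkeeping with the radii $r,\delta,h(\eta)$; the point to get right is making the neighborhood on which $\partial f(z)\subset\partial f(x)+B_{\eta/2}$ holds compatible with the neighborhood on which $O(y,h)$ decays uniformly --- it is precisely the uniformity built into Assumption~\ref{Assumption31} that lets a single $\delta(\eta)$ serve both estimates. I would also state explicitly that $\partial f(x)+B_\eta$ is closed and convex, so that passing to $W_h$, which is a closed convex hull, does not leave the target set. Beyond the standard outer semicontinuity of Clarke's subdifferential (e.g.\ Clarke, \textit{Optimization and Nonsmooth Analysis}), no further machinery is needed.
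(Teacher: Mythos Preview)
The paper does not prove this theorem; it is stated in the appendix as a result adapted from Bagirov et al.\ (2013), with the reader referred to that reference for the argument. Your proof is correct and is essentially the natural one: use upper semicontinuity of the Clarke subdifferential to trap $\partial_{d,h}f(y)$ inside $\partial f(x)+B_{\eta/2}$ for $y$ close to $x$ and $h$ small, use the uniform decay of $O(y,h)$ from Assumption~\ref{Assumption31} to absorb the quasisecant error into a further $B_{\eta/2}$, and then pass to the closed convex hull using that $\partial f(x)+\overline{B_\eta}$ is closed and convex. The second inclusion is then immediate by intersecting over $\eta>0$, exactly as you argue. There is nothing to compare against in the paper itself.
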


%
%

\subsection{Nonsmooth Newton}\label{App:Newton}
We propose a nonsmooth Newton method to compute Nash equilibria.
In order to keep the readability of the paper, we specify the structure of the generalized Jacobian here.

We look at the elements of $\partial F_\varepsilon$ as a block matrix:
$$H=\begin{bmatrix}
\boxed{A } & \boxed{B}\\\boxed{C}&\boxed{D}
\end{bmatrix},$$
where the block $\boxed{A }\in\R^{n\times n}$ is
\begin{align*}
\nonumber\boxed{A}&=Q+ \begin{bmatrix}
\nabla_{x_{(1)}}(\nabla_{x_{1}} g_{1}(x_{1}) \lambda_{1}) & & \\
&\ddots & 
\\
& & 	\nabla_{x_{N}}(\nabla_{x_{N}} g_{N}(x_{N}) \lambda_{N})
\end{bmatrix}\\
& +\frac{1}{2}\sum\limits_{i=1}^m a_i (L^\top -Q_y^{-1}B^\top )^\top _{:,i}(L^\top -Q_y^{-1}B^\top )_{:,i} \\ & \left(\frac{1}{\sqrt{[(L^\top -Q_y^{-1}B^\top ) x]_i^2 +4\varepsilon^2} } -\frac{[(L^\top -Q_y^{-1}B^\top )x]_i^2}{\sqrt{[(L^\top -Q_y^{-1}B^\top ) x]_i^2 +4\varepsilon^2}^3}\right),
\end{align*}
the block $\boxed{B}\in\R^{n\times\bar{m}}$ is
\begin{equation*}
\boxed{B}=\begin{bmatrix}
\nabla_{x_{1}} g_{1}(x_{1}) & & \\
&\ddots & \\
& & \nabla_{x_{N}} g_{N}(x_{N}) 
\end{bmatrix},
\end{equation*}
and
the block diagonal $\boxed{C}\in \R^{\bar{m}\times n}$ is
\begin{equation*}
\boxed{C}=\begin{bmatrix}
\boxed{C_1} & & \\ & \ddots & \\ & & \boxed{C_N}
\end{bmatrix},
\end{equation*}
with the blocks $\boxed{C_\nu}\in\R^{m_\nu\times n_\nu}$ and the entries
\begin{equation*}
\left(\boxed{C_\nu}\right)_{i,j}=\partial^C_{x_\nu^j} \min\left\{ \lambda_\nu^i, -g_\nu^i(x_\nu)  \right\}=\begin{cases} 
0, & \lambda_\nu^i <-g_\nu^i(x_\nu), \\
-\frac{\partial}{\partial x_\nu^j} g_\nu^i(x_\nu), & \lambda_\nu^i > -g_\nu^i(x_\nu), \\
\left[0, -\frac{\partial}{\partial x_\nu^j} g_\nu^i(x_\nu) \right], & \lambda_\nu^i = -g_\nu^i(x_\nu). \\
\end{cases}
\end{equation*}
and the diagonal matrix: 
\begin{equation*}
\boxed{D}=\begin{bmatrix}
\boxed{D_1} & & \\ & \ddots & \\ & & \boxed{D_N}
\end{bmatrix} \in\R^{\bar{m}\times\bar{m}},
\end{equation*}
with its blocks $\boxed{D_\nu}\in\R^{m_\nu\times m_\nu}$ with the entries:
\begin{equation*}
\left(\boxed{D_\nu}\right)_{i}=\partial^C_{\lambda_\nu^i} \min\left\{ \lambda_\nu^i, -g_\nu^i(x_\nu)  \right\}=\begin{cases} 
1, & \lambda_\nu^i <-g_\nu^i(x_\nu), \\
0, & \lambda_\nu^i > -g_\nu^i(x_\nu), \\
\left[1,0 \right], &  \lambda_\nu^i = -g_\nu^i(x_\nu). \\
\end{cases}
\end{equation*}

\subsection{The Data}
\label{App:Data}
In the following, we specify the data used for the experiments presented in Section 5.
We adapted the data used in \cite{HuFu2013}.

\subsubsection{Data Set 1}
We consider $N    = 2$ leader with each $n_1=n_2=2$ variables.
The objectives of the leader are given by
\begin{equation*}
Q_{1}= \begin{bmatrix}
1.7 & 1.6 \\
1.6 & 2.8 
\end{bmatrix}, \quad Q_2=\begin{bmatrix} 2.7 & 1.3 \\ 1.3 & 3.6 
\end{bmatrix}, \quad c_1=c_2=\begin{bmatrix} 0\\ 0\end{bmatrix},  \quad a=\begin{bmatrix} 1.4 \\ 2.6 \\ 2.1\end{bmatrix}.
\end{equation*}
Each leader has $m_1=m_2=3$ linear constraints $g_\nu=A_\nu^Tx_\nu + b_\nu\leq 0 $ with 
\begin{align*}
A_1=\begin{bmatrix}
1.6  & 0.8 & 1.3 \\
2.6 &  2.2 & 1.7
\end{bmatrix}, \quad	b_1=\begin{bmatrix}
1.6 \\ 1.2 \\ 0.4 
\end{bmatrix},\qquad
A_2=\begin{bmatrix}
1.8 & 1.6 & 1.4 \\
1.3 & 1.2 & 2.7
\end{bmatrix}, \quad	b_2=\begin{bmatrix}
1.6 \\ 1.5 \\ 2.6
\end{bmatrix}.
\end{align*}

The follower has $M=3$ variables and its objective and constraints are given by
\begin{equation*}
Q_y=\begin{bmatrix}
2.5 &0& 0\\
0& 3.6& 0 \\
0 &0 &4.6
\end{bmatrix}, \quad B= \begin{bmatrix}
2.3& 1.4 &2.6 \\
1.3& 2.1 &1.7 \\
2.5& 1.9& 1.4 \\
1.3 &2.4 &1.6
\end{bmatrix}, \quad L=\begin{bmatrix}
1.3& 2.4& 1.8 \\
1.3 &2.4& 1.8 \\
1.3& 2.4& 1.8\\
1.3& 2.4& 1.8 
\end{bmatrix}. 
\end{equation*}

\subsubsection{Data Set 2}
We consider $N    = 3$ leader with each $n_1=n_2=n_3=2$ variables.
The objectives of the leader are given by
\begin{align*}
Q_{1}= \begin{bmatrix}
2.5 &1.6 \\
1.6 &3.8  \end{bmatrix}, ~ Q_2=\begin{bmatrix} 2.9 &1.3 \\
1.3 &1.8   
\end{bmatrix},~ Q_3=\begin{bmatrix}
3.2 &2.3 \\
2.3 &2.6  
\end{bmatrix}, ~ c_1=c_2=c_3=\begin{bmatrix} 0\\ 0\end{bmatrix}, ~  a=\begin{bmatrix} 0.4 \\1.6\\ 2.6\end{bmatrix}.
\end{align*}
Each leader has $m_1=m_2=m_3=3$ linear constraints $g_\nu=A_\nu^Tx_\nu + b_\nu\leq 0 $ with 
\begin{align*}
A_1&=&\begin{bmatrix}
1.6& 0.8 &1.3\\
2.6& 2.2 &1.7
\end{bmatrix}, ~~
A_2&=&\begin{bmatrix}
1.8 & 1.6 & 1.4 \\
1.3 & 1.2 & 2.7
\end{bmatrix}, ~~
A_3&=&\begin{bmatrix}
2.3 &1.9& 1.6 \\
1.3 &1.7& 2.7
\end{bmatrix}, \\
b_1&=&\begin{bmatrix}
1.6&1.2 & 0.4
\end{bmatrix}^\top, ~~
	b_2&=&\begin{bmatrix}
1.6& 1.5 & 2.6
\end{bmatrix}^\top,~~
	b_3&=&\begin{bmatrix}
1.5& 0.3 &1.8
\end{bmatrix}^\top.
\end{align*}

The follower has $M=3$ variables and its objective and constraints are given by
\begin{equation*}
Q_y=\begin{bmatrix}
3.7 &0 &0 \\
0& 2.6& 0 \\
0& 0& 0.7
\end{bmatrix}, \quad B= \begin{bmatrix}
0.8& 2.1 &1.3 \\
1.5 & 2.3 &0.7 \\
1.5 & 0.9 &2.4 \\
1.8 &2.3& 3.6 \\
1.3 &1.7 &1.7 \\
1.1 &2.6& 1.6 \\
\end{bmatrix}, \quad L=\begin{bmatrix}
0.8 & 2.1 & 1.3 \\
1.5 &2.3 & 0.7 \\
1.5 &0.9& 2.4 \\
1.8 &2.3& 3.6 \\
0.5 &1.1& 2.1 \\
1.2 &1.5& 1.8 
\end{bmatrix}. 
\end{equation*}

\section*{Acknowledgements}
Special thanks to Michael Ferris and Olivier Huber for fruitful discussions on model extensions.
This work was supported by the DFG under Grant STE2063/2-1. 

\bibliographystyle{abbrvnat}
\bibliography{MLFGarXiv.bib}

\begin{thebibliography}{30}
\providecommand{\natexlab}[1]{#1}
\providecommand{\url}[1]{\texttt{#1}}
\expandafter\ifx\csname urlstyle\endcsname\relax
  \providecommand{\doi}[1]{doi: #1}\else
  \providecommand{\doi}{doi: \begingroup \urlstyle{rm}\Url}\fi

\bibitem[Allevi et~al.(2018)Allevi, Aussel, and Riccardi]{Allevi2018}
E.~Allevi, D.~Aussel, and R.~Riccardi.
\newblock On an equilibrium problem with complementarity constraints
  formulation of pay-as-clear electricity market with demand elasticity.
\newblock \emph{Journal of Global Optimization}, 70\penalty0 (2):\penalty0
  329--346, Feb 2018.
\newblock ISSN 1573-2916.

\bibitem[Aussel and Svensson(2018)]{Aussel2018}
D.~Aussel and A.~Svensson.
\newblock Some remarks about existence of equilibria, and the validity of the
  epcc reformulation for multi-leader-follower games.
\newblock \emph{Journal of nonlinear and convex analysis}, 19\penalty0
  (7):\penalty0 1141--1162, 2018.

\bibitem[Aussel et~al.(2016)Aussel, Cervinka, and Marechal]{Aussel16}
D.~Aussel, M.~Cervinka, and M.~Marechal.
\newblock Deregulated electricity markets with thermal losses and production
  bounds: models and optimality conditions.
\newblock \emph{RAIRO-Oper. Res.}, 50\penalty0 (1):\penalty0 19--38, 2016.

\bibitem[Aussel et~al.(2017{\natexlab{a}})Aussel, Bendotti, and
  Pi\v{s}t\v{e}k]{Aussel17part1}
D.~Aussel, P.~Bendotti, and M.~Pi\v{s}t\v{e}k.
\newblock Nash equilibrium in a pay-as-bid electricity market: Part 1 -
  existence and characterization.
\newblock \emph{Optimization}, 66\penalty0 (6):\penalty0 1013--1025,
  2017{\natexlab{a}}.

\bibitem[Aussel et~al.(2017{\natexlab{b}})Aussel, Bendotti, and
  Pi\v{s}t\v{e}k]{Aussel17part2}
D.~Aussel, P.~Bendotti, and M.~Pi\v{s}t\v{e}k.
\newblock Nash equilibrium in a pay-as-bid electricity market: Part 2 - best
  response of a producer.
\newblock \emph{Optimization}, 66\penalty0 (6):\penalty0 1027--1053,
  2017{\natexlab{b}}.

\bibitem[Bagirov et~al.(2013)Bagirov, Jin, Karmitsa, AlÂ Nuaimat, and
  Sultanova]{Bagirov2013}
A.~M. Bagirov, L.~Jin, N.~Karmitsa, A.~AlÂ Nuaimat, and N.~Sultanova.
\newblock Subgradient method for nonconvex nonsmooth optimization.
\newblock \emph{Journal of Optimization Theory and Applications}, 157\penalty0
  (2):\penalty0 416--435, May 2013.
\newblock ISSN 1573-2878.

\bibitem[DeMiguel and Xu(2009)]{DeMiguel2009}
V.~DeMiguel and H.~Xu.
\newblock \textcolor{black}{A Stochastic Multiple-Leader Stackelberg Model:
  Analysis, Computation, and Application}.
\newblock \emph{Operations Research}, 57\penalty0 (5):\penalty0 1220--1235, oct
  2009.
\newblock \doi{10.1287/opre.1080.0686}.

\bibitem[Dirkse and Ferris(1995)]{Dirkse1995}
S.~P. Dirkse and M.~C. Ferris.
\newblock \textcolor{black}{The path solver: a nommonotone stabilization scheme
  for mixed complementarity problems}.
\newblock \emph{Optimization Methods and Software}, 5\penalty0 (2):\penalty0
  123--156, jan 1995.
\newblock \doi{10.1080/10556789508805606}.

\bibitem[Facchinei and Pang(2007)]{facchinei2007finite}
F.~Facchinei and J.~Pang.
\newblock \emph{Finite-Dimensional Variational Inequalities and Complementarity
  Problems}.
\newblock Springer Series in Operations Research and Financial Engineering.
  Springer New York, 2007.
\newblock ISBN 9780387218151.

\bibitem[Ferris and Munson(1999)]{Ferris1999}
M.~C. Ferris and T.~S. Munson.
\newblock \textcolor{black}{Interfaces to {PATH} 3.0: Design, Implementation
  and Usage}.
\newblock In \emph{Computational Optimization}, pages 207--227. Springer {US},
  1999.

\bibitem[Henrion et~al.(2012)Henrion, Outrata, and Surowiec]{henrion12}
R.~Henrion, J.~Outrata, and T.~Surowiec.
\newblock Analysis of m-stationary points to an epec modeling oligopolistic
  competition in an electricity spot market.
\newblock \emph{ESAIM: Control, Optimisation and Calculus of Variations},
  18\penalty0 (2):\penalty0 295–317, 2012.

\bibitem[Hu and Fukushima(2013)]{HuFu2013}
M.~Hu and M.~Fukushima.
\newblock Existence, uniqueness, and computation of robust nash equilibria in a
  class of multi-leader-follower games.
\newblock \emph{SIAM Journal on Optimization}, 23\penalty0 (2):\penalty0
  894--916, 2013.

\bibitem[Hu and Ralph(2007)]{HR07}
X.~Hu and D.~Ralph.
\newblock Using epecs to model bilevel games in restructured electricity
  markets with locational prices.
\newblock \emph{Operations Research}, 55\penalty0 (5):\penalty0 809--827, 2007.

\bibitem[Janin(1984)]{Janin1984}
R.~Janin.
\newblock \emph{Directional derivative of the marginal function in nonlinear
  programming}, pages 110--126.
\newblock Springer Berlin Heidelberg, Berlin, Heidelberg, 1984.
\newblock ISBN 978-3-642-00913-6.
\newblock \doi{10.1007/BFb0121214}.
\newblock URL \url{https://doi.org/10.1007/BFb0121214}.

\bibitem[Kim and Ferris(2019)]{Kim2019}
Y.~Kim and M.~C. Ferris.
\newblock \textcolor{black}{Solving equilibrium problems using extended
  mathematical programming}.
\newblock \emph{Mathematical Programming Computation}, 11\penalty0
  (3):\penalty0 457--501, mar 2019.
\newblock \doi{10.1007/s12532-019-00156-4}.

\bibitem[Koh and Shepherd(2010)]{Koh10}
A.~Koh and S.~Shepherd.
\newblock Tolling, collusion and equilibrium problems with equilibrium
  constraints.
\newblock 2010.
\newblock ISSN 1825-3997.

\bibitem[Kulkarni and Shanbhag(2014)]{Kulkarni2014}
A.~A. Kulkarni and U.~V. Shanbhag.
\newblock \textcolor{black}{A Shared-Constraint Approach to Multi-Leader
  Multi-Follower Games}.
\newblock \emph{Set-Valued and Variational Analysis}, 22\penalty0 (4):\penalty0
  691--720, aug 2014.
\newblock \doi{10.1007/s11228-014-0292-5}.

\bibitem[Kulkarni and Shanbhag(2015)]{Kulkarni2015}
A.~A. Kulkarni and U.~V. Shanbhag.
\newblock \textcolor{black}{An Existence Result for Hierarchical Stackelberg
  v/s Stackelberg Games}.
\newblock \emph{{IEEE} Transactions on Automatic Control}, 60\penalty0
  (12):\penalty0 3379--3384, dec 2015.
\newblock \doi{10.1109/tac.2015.2423891}.

\bibitem[Leyffer and Munson(2010)]{LeyfferMunson10}
S.~Leyffer and T.~Munson.
\newblock Solving multi-leader-common-follower games.
\newblock \emph{Optimization Methods and Software}, 25\penalty0 (4):\penalty0
  601--623, 2010.

\bibitem[Monderer and Shapley(1996)]{MONDERER1996}
D.~Monderer and L.~S. Shapley.
\newblock Potential games.
\newblock \emph{Games and Economic Behavior}, 14\penalty0 (1):\penalty0 124 --
  143, 1996.
\newblock ISSN 0899-8256.

\bibitem[Nikaid{\^o} and Isoda(1955)]{nikaido1955note}
H.~Nikaid{\^o} and K.~Isoda.
\newblock Note on non-cooperative convex game.
\newblock \emph{Pacific Journal of Mathematics}, 5\penalty0 (5):\penalty0
  807--815, 1955.

\bibitem[Pang and Fukushima(2005)]{Pang2005}
J.-S. Pang and M.~Fukushima.
\newblock Quasi-variational inequalities, generalized nash equilibria, and
  multi-leader-follower games.
\newblock \emph{Computational Management Science}, 2\penalty0 (1):\penalty0
  21--56, Jan 2005.
\newblock ISSN 1619-6988.

\bibitem[Qi and Sun(1993)]{Qi1993}
L.~Qi and J.~Sun.
\newblock A nonsmooth version of newton's method.
\newblock \emph{Mathematical Programming}, 58\penalty0 (1):\penalty0 353--367,
  Jan 1993.
\newblock ISSN 1436-4646.

\bibitem[Scheel and Scholtes(2000)]{ScheelScholtes00}
H.~Scheel and S.~Scholtes.
\newblock Mathematical programs with complementarity constraints: Stationarity,
  optimality, and sensitivity.
\newblock \emph{Mathematics of Operations Research}, 25\penalty0 (1):\penalty0
  1--22, 2000.
\newblock ISSN 0364765X, 15265471.

\bibitem[Sherali(1984)]{Sherali84}
H.~D. Sherali.
\newblock A multiple leader stackelberg model and analysis.
\newblock \emph{Operations Research}, 32\penalty0 (2):\penalty0 390--404, 1984.

\bibitem[Steffensen and Th\"unen(2019)]{Steffensen2019P}
S.~Steffensen and A.~Th\"unen.
\newblock An explicit nash equilibrium to a multi-leader-follower game.
\newblock In \emph{PAMM-Proc. Appl. Math. Mech.} Wiley-VCH Verlag GmbH \& Co.
  KGaA, Weinheim, 2019.

\bibitem[Steffensen and Ulbrich(2010)]{Sonja10}
S.~Steffensen and M.~Ulbrich.
\newblock A new relaxation scheme for mathematical programs with equilibrium
  constraints.
\newblock \emph{SIAM Journal on Optimization}, 20\penalty0 (5):\penalty0
  2504--2539, 2010.

\bibitem[Su(2005)]{SU2005}
C.-L. Su.
\newblock A sncp method for solving equilibrium problems with equilibrium
  constraints.
\newblock \emph{Computing in Economics and Finance 150}, 2005.

\bibitem[Su(2007)]{Su2007}
C.-L. Su.
\newblock \textcolor{black}{Analysis on the forward market equilibrium model}.
\newblock \emph{Operations Research Letters}, 35\penalty0 (1):\penalty0 74--82,
  jan 2007.
\newblock \doi{10.1016/j.orl.2006.01.006}.

\bibitem[Wang et~al.(2008)Wang, Chiu, and Lui]{Wang08}
J.~H. Wang, D.~M. Chiu, and J.~C.~S. Lui.
\newblock A game-theoretic analysis of the implications of overlay network
  traffic on ips peering.
\newblock \emph{Computer Networks}, 2008.

\end{thebibliography}

\end{document}